\documentclass[11pt, reqno]{amsart}

\usepackage[utf8]{inputenc}
\usepackage[margin=0.5in]{geometry}
\usepackage{lipsum, fullpage,comment, textcomp, gensymb, amssymb, multirow, graphicx, amsmath, mathtools, soul, amsfonts, hyperref, bm, caption, mathrsfs, multicol, multirow, float, pgf, tikz, pgfplots, wrapfig, amsthm, enumerate, enumitem, dsfont, array, booktabs, framed, tikz-cd, xfrac, tabularx, extarrows, accents,  old-arrows, xparse, csquotes, microtype, thmtools, nicematrix}
\pgfplotsset{compat=1.18}
\usepackage[toc,page]{appendix} 

\usepackage[backend=biber, style=alphabetic, maxnames=99, minnames=99, sorting=nyt]{biblatex}
\usepackage[UKenglish]{babel}
\usepackage[table]{xcolor}

\usetikzlibrary {arrows.meta,bending,positioning,patterns,decorations}
\usepackage{pbox}
\usepackage{tkz-graph}
\usetikzlibrary{decorations.pathmorphing,fit,positioning,calc,shapes}
\usepackage{graphicx}
\usepackage{cleveref,tikz-cd,pbox,wrapfig}
\definecolor{amethyst}{rgb}{0.6, 0.4, 0.8}
\definecolor{atomictangerine}{rgb}{1.0, 0.6, 0.4}
\definecolor{deeppeach}{rgb}{1.0, 0.8, 0.64}
\definecolor{eggshell}{rgb}{0.94, 0.92, 0.84}
\definecolor{lightapricot}{rgb}{0.99, 0.84, 0.69}
\definecolor{lemonchiffon}{rgb}{1.0, 0.98, 0.8}
\definecolor{roundabout}{rgb}{1.0, 0.91, 0.75}
\definecolor{atomictangerine}{rgb}{1.0, 0.6, 0.4}
\definecolor{ruby}{rgb}{0.88, 0.07, 0.37}
\definecolor{sapphire}{rgb}{0.03, 0.15, 0.4}
\definecolor{LightBlue}{rgb}{0.0, 0.48, 0.65}

\def\rootsep{0.03}               
\def\clustersep{0.06}            
\def\cnamescale{0.4}             
\def\cdepthscale{0.4}            
\def\cltopskip{1pt}              
\def\clbottomskip{1pt}           

\def\rootscale{0.5}   \def\rootcolor{gray}
\def\rootscaleA{0.7}  \def\rootcolorA{yellow}
\def\rootscaleB{0.5}  \def\rootcolorB{green}
\def\rootscaleC{0.4}  \def\rootcolorC{sapphire}
\def\rootscaleD{0.45}  \def\rootcolorD{blue}
\tikzset{
  clA/.style = {very thick,black},
  clB/.style = {thick,purple}
}

\def\graphdslabelscale{0.6}

\def\GraphScale{0.6}

\tikzset{
  root/.style = {circle,scale=\rootscale,fill=\rootcolor},
    rc/.style 2 args = {right=#1*1.5*\clustersep of {#2.east|-first},root}, rr/.style = {right=\rootsep of {#1.east|-first},root},
  roott/.style = {circle,inner sep=-2pt,minimum size=5pt,black,font=\ttfamily\footnotesize},
    rct/.style 2 args = {right=#1*1.5*\clustersep of {#2.east|-first},roott}, rrt/.style = {right=\rootsep of {#1.east|-first},roott},
  rootA/.style = {circle,scale=\rootscaleA,ball color=\rootcolorA},
    rcA/.style 2 args = {right=#1*1.5*\clustersep of {#2.east|-first},rootA}, rrA/.style = {right=\rootsep of {#1.east|-first},rootA},
  rootB/.style = {circle,scale=\rootscaleB,ball color=\rootcolorB},
    rcB/.style 2 args = {right=#1*1.5*\clustersep of {#2.east|-first},rootB}, rrB/.style = {right=\rootsep of {#1.east|-first},rootB},
  rootC/.style = {diamond,scale=\rootscaleC,ball color=\rootcolorC},
    rcC/.style 2 args = {right=#1*1.5*\clustersep of {#2.east|-first},rootC}, rrC/.style = {right=\rootsep of {#1.east|-first},rootC},
  rootD/.style = {circle,scale=\rootscaleD,ball color=\rootcolorD},
    rcD/.style 2 args = {right=#1*1.5*\clustersep of {#2.east|-first},rootD}, rrD/.style = {right=\rootsep of {#1.east|-first},rootD},
  cluster/.style = {draw=black!90,thick,rounded corners,inner sep=22*\clustersep,outer xsep=22*\clustersep,fit=#1},
  clabel/.style  = {anchor=west,scale=\cdepthscale,black,inner sep=0,outer xsep=1,outer ysep=0},
  clabelL/.style = {above right=-\clustersep of #1t.north east,clabel},
  clabelD/.style = {below right=-\clustersep of #1t.south east,clabel},
  clouter/.style = {inner sep=0,outer sep=0,fit=#1}
}

\def\Cluster #1 = #2;{\node[cluster=#2] (#1) {};}
\def\ClusterL #1[#2] = #3;{
  \node[cluster=#3] (#1t) {}; \node[clabelL=#1] (#1l) {$#2$}; \node[clouter=(#1t)(#1l)] (#1) {};}
\def\ClusterD #1[#2] = #3;{
  \node[cluster=#3] (#1t) {}; \node[clabelD=#1] (#1d) {$#2$}; \node[clouter=(#1t)(#1d)] (#1) {};}
\def\ClusterLD #1[#2][#3] = #4;{
  \node[cluster=#4] (#1t) {}; \node[clabelL=#1] (#1l) {$#2$}; 
  \node[clabelD=#1] (#1d) {$#3$}; \node[clouter=(#1t)(#1l)(#1d)] (#1) {};}
\def\ClusterLDName #1[#2][#3][#4] = #5;{
  \node[cluster=#5] (#1t) {}; \node[clabelL=#1] (#1l) {$#2$}; 
  \node[clabelD=#1] (#1d) {$#3$}; 
  \node[scale=\cnamescale,above=\clustersep/3 of #1t,inner sep=0, outer sep=0] (#1n) {$#4$}; 
  \node[clouter=(#1l)(#1d)(#1t)] (#1) {};}

\newcommand{\Root}[4][]{
  \ifx\relax#2\relax\node[rr#1=#3] (#4) {};\else\node[rc#1={#2}{#3}] (#4) {};\fi}
\newcommand{\RootT}[5][]{
  \ifx\relax#2\relax\node[rrt#1=#3] (#4) {#5};\else\node[rct#1={#2}{#3}] (#4) {#5};\fi}

\def\frob(#1)(#2){\path[draw,thick,shorten <=-22*\clustersep,shorten >=-22*\clustersep](#1.east)--(#2.west|-#1){};}

\usepackage{pbox}
\def\pb#1{\pbox[c]{\textwidth}{\hfil #1\hfil}}

\long\def\clusterpicture#1\endclusterpicture{\pb{\vbox to \cltopskip{\vfill}\\%
  \begin{tikzpicture}\node[coordinate] (first) {};#1\end{tikzpicture}\\[-11pt]\vbox to \clbottomskip{\vfill}}}   
\long\def\clusterpictureopt#1#2\endclusterpicture{\pb{\vbox to \cltopskip{\vfill}\\%
  \begin{tikzpicture}[#1]\node[coordinate] (first) {};#2\end{tikzpicture}\\[-11pt]\vbox to \clbottomskip{\vfill}}}

\def\pb#1{\pbox[c]{\textwidth}{\hfil #1\hfil}}

\def\GraphVertices{\SetVertexNormal[Shape=circle, FillColor=blue!50, LineColor=blue!50, LineWidth=0.8pt]
  \tikzset{VertexStyle/.append style = {inner sep=0.5pt,minimum size=0.3em,font = \tiny\bfseries}}}

\def\BlueEdges{  \SetUpEdge[lw=0.8pt,color=blue!70]
   \tikzset{EdgeStyle/.append style = {shorten <=0.5pt,shorten >=0.5pt}}}

\def\LoopW(#1){
  \path[draw,-,thick,color=blue!70] (#1) edge[out=155,in=90] ($(#1)-(1.3,0)$);
  \path[draw,-,thick,color=blue!70] (#1) edge[out=210,in=270] ($(#1)-(1.3,0)$);
}
\def\LoopE(#1){
  \path[draw,-,thick,color=blue!70] (#1) edge[out=25,in=90] ($(#1)+(1.3,0)$);
  \path[draw,-,thick,color=blue!70] (#1) edge[out=-25,in=270] ($(#1)+(1.3,0)$);
}
\def\LoopS(#1){
  \path[draw,-,thick,color=blue!70] (#1) edge[out=115,in=180] ($(#1)+(0,1.2)$);
  \path[draw,-,thick,color=blue!70] (#1) edge[out=65,in=0] ($(#1)+(0,1.2)$);
}
\def\LoopN(#1){
  \path[draw,-,thick,color=blue!70] (#1) edge[out=-115,in=180] ($(#1)-(0,1.2)$);
  \path[draw,-,thick,color=blue!70] (#1) edge[out=-65,in=0] ($(#1)-(0,1.2)$);
}
\def\EdgeW(#1){
  \path[draw,-,thick,color=blue!70] (#1+) edge[out=180,in=90] ($(#1+)-(1.3,0.3)$);
  \path[draw,-,thick,color=blue!70] (#1-) edge[out=180,in=270] ($(#1+)-(1.3,0.3)$);
}
\def\EdgeE(#1){
  \path[draw,-,thick,color=blue!70] (#1+) edge[out=0,in=90] ($(#1-)+(1.3,0.3)$);
  \path[draw,-,thick,color=blue!70] (#1-) edge[out=0,in=270] ($(#1-)+(1.3,0.3)$);
}
\def\EdgeS(#1){
  \path[draw,-,thick,color=blue!70] (#1+) edge[out=90,in=0] ($(#1-)+(0.3,1.3)$);
  \path[draw,-,thick,color=blue!70] (#1-) edge[out=90,in=180] ($(#1-)+(0.3,1.3)$);
}
\def\EdgeN(#1){
  \path[draw,-,thick,color=blue!70] (#1+) edge[out=270,in=0] ($(#1+)-(0.3,1.3)$);
  \path[draw,-,thick,color=blue!70] (#1-) edge[out=270,in=180] ($(#1+)-(0.3,1.3)$);
}
\def\GCircle(#1,#2)(#3,#4){
  \path(#1,#2) node[coordinate] (1) {};
  \path(#3,#4) node[coordinate] (2) {};
  \path[draw,-,thick,color=blue!70] (1) edge[out=90,in=90] (2);
  \path[draw,-,thick,color=blue!70] (2) edge[out=270,in=270] (1);
}

\def\EdgeSign(#1)(#2)#3(#4)#5{
  \node at ($(#1)!#3!(#2) + (#4)$) [color=black, scale=\graphdslabelscale] {$\scriptstyle #5$};
}

\def\GraphEdgeSignDist{0.55}

\def\GraphEdgeSignS(#1)(#2)#3#4{\EdgeSign(#1)(#2)#3(0,-\GraphEdgeSignDist){#4}}

\def\VSwap#1#2#3#4{\path[draw](#1) edge[<->,#3,shorten >=#4pt,shorten <=#4pt] (#2){};}
\def\VArr#1#2#3#4{\path[draw](#1) edge[->,#3,shorten >=#4pt,shorten <=#4pt] (#2){};}

\def\ESwapOfs#1#2#3#4#5#6#7#8{\VSwap{$(#1)!0.5!(#2) + (#6)$}{$(#3)!0.5!(#4) + (#7)$}{#5}{#8}}

\def\EArrOfs#1#2#3#4#5#6#7#8{\VArr{$(#1)!0.5!(#2) + (#6)$}{$(#3)!0.5!(#4) + (#7)$}{#5}{#8}}

\def\tgrGB{\raise-7pt\hbox{\begin{tikzpicture}[scale=\GraphScale]
  \GraphVertices
  \Vertex[x=1.50,y=0.000,L=1]{1};
  \coordinate (2) at (0.000,0.000);
  \BlueEdges
  \LoopW(1)
\GraphEdgeSignS(1)(2){0.5}{n}\end{tikzpicture}}}

\def\tgrGBex{\raise-7pt\hbox{\begin{tikzpicture}[scale=\GraphScale]
  \GraphVertices
  \Vertex[x=1.50,y=0.000,L=1]{1};
  \coordinate (2) at (0.000,0.000);
  \BlueEdges
  \LoopW(1)
\GraphEdgeSignS(1)(2){0.5}{1}\end{tikzpicture}}}

\def\tgrGC{\raise-7pt\hbox{\begin{tikzpicture}[scale=\GraphScale]
  \GraphVertices
  \Vertex[x=1.50,y=0.000,L=1]{1};
  \coordinate (2) at (0.000,0.000);
  \BlueEdges
  \LoopW(1)
\GraphEdgeSignS(1)(2){0.5}{n}\ESwapOfs1212{}{0,-0.25}{0,0.25}{0.5}\end{tikzpicture}}}

\def\tgrGD{\raise-7pt\hbox{\begin{tikzpicture}[scale=\GraphScale]
  \GraphVertices
  \Vertex[x=1.50,y=0.000,L=\relax]{1};
  \coordinate (2) at (3.00,0.000);
  \coordinate (3) at (0.000,0.000);
  \BlueEdges
  \LoopE(1)
  \LoopW(1)
\GraphEdgeSignS(1)(3){0.5}{n}\GraphEdgeSignS(1)(2){0.5}{n}\end{tikzpicture}}}

\def\tgrGE{\raise-7pt\hbox{\begin{tikzpicture}[scale=\GraphScale]
  \GraphVertices
  \Vertex[x=1.50,y=0.000,L=\relax]{1};
  \coordinate (2) at (3.00,0.000);
  \coordinate (3) at (0.000,0.000);
  \BlueEdges
  \LoopE(1)
  \LoopW(1)
\GraphEdgeSignS(1)(3){0.5}{n}\GraphEdgeSignS(1)(2){0.5}{n}\ESwapOfs1212{}{0,-0.25}{0,0.25}{0.5}\end{tikzpicture}}}

\def\tgrGF{\raise-7pt\hbox{\begin{tikzpicture}[scale=\GraphScale]
  \GraphVertices
  \Vertex[x=1.50,y=0.000,L=\relax]{1};
  \coordinate (2) at (3.00,0.000);
  \coordinate (3) at (0.000,0.000);
  \BlueEdges
  \LoopE(1)
  \LoopW(1)
\GraphEdgeSignS(1)(3){0.5}{n}\GraphEdgeSignS(1)(2){0.5}{n}\ESwapOfs1313{}{0,-0.25}{0,0.25}{0.5}\ESwapOfs1212{}{0,-0.25}{0,0.25}{0.5}\end{tikzpicture}}}

\def\tgrGG{\raise-7pt\hbox{\begin{tikzpicture}[scale=\GraphScale]
  \GraphVertices
  \Vertex[x=1.50,y=0.000,L=\relax]{1};
  \coordinate (2) at (3.00,0.000);
  \coordinate (3) at (0.000,0.000);
  \BlueEdges
  \LoopE(1)
  \LoopW(1)
\GraphEdgeSignS(1)(3){0.5}{n}\GraphEdgeSignS(1)(2){0.5}{n}\ESwapOfs1312{in=160,out=20}{0.2,0.3}{-0.2,0.3}{0.5}\end{tikzpicture}}}

\def\tgrGH{\raise-7pt\hbox{\begin{tikzpicture}[scale=\GraphScale]
  \GraphVertices
  \Vertex[x=1.50,y=0.000,L=\relax]{1};
  \coordinate (2) at (3.00,0.000);
  \coordinate (3) at (0.000,0.000);
  \BlueEdges
  \LoopE(1)
  \LoopW(1)
\GraphEdgeSignS(1)(3){0.5}{n}\GraphEdgeSignS(1)(2){0.5}{n}\EArrOfs1312{in=150,out=30}{0.1,0.29}{0,0.35}{0.5}\EArrOfs1213{in=-60,out=-60}{0,0.2}{0.3,-0.25}{0.5}\end{tikzpicture}}}

\def\tgrGA{\raise-3pt\hbox{\begin{tikzpicture}[scale=\GraphScale]
  \GraphVertices
  \Vertex[x=0.000,y=0.000,L=2]{1};
  \BlueEdges
\end{tikzpicture}}}

\usepackage{ifmtarg} 
\makeatletter
\newcommand{\TODO}[1]{\@ifmtarg{#1}{\textcolor{red}{\emph{\textbf{TODO}}}~}{\emph{\textcolor{red}{\textbf{TODO:}~#1}~}}}
\makeatother

\usepackage[makeroom]{cancel}
\usepackage[OT2,T1]{fontenc}
\DeclareSymbolFont{cyrletters}{OT2}{wncyr}{m}{n}
\DeclareMathSymbol{\Sha}{\mathalpha}{cyrletters}{"58}

\usetikzlibrary{%
	matrix,%
	calc,%
	arrows%
}
\hypersetup{
	colorlinks=true,
	linkcolor=black,
	filecolor=magenta,      
	urlcolor=black,
	citecolor=blue,
	pdftitle={Overleaf Example},
	pdfpagemode=FullScreen,
}

\newtheorem{thm}{Theorem}[section]

\newtheorem{lem}[thm]{Lemma}
\newtheorem{prop}[thm]{Proposition}

\theoremstyle{definition}
\newtheorem{ex}[thm]{Example}
\theoremstyle{definition}
\newtheorem{definition}[thm]{Definition}

\theoremstyle{remark}
\newtheorem*{rem}{Remark}

\newcommand{\Z}{\mathbb{Z}}

\newcommand{\Q}{\mathbb{Q}}

\newcommand{\F}{\mathbb{F}}

\title{Recovering Kodaira types from $\ell$-torsion on Elliptic curves}
\author[]{Naina Praveen}

\address{Department of Mathematics, University College London, Gower Street, London WC1E 6BT, UK}

\email{naina.praveen.24@ucl.ac.uk}

\begin{document}

		\begin{abstract}
      The classical N\'{e}ron-Ogg-Shafarevich criterion characterises good reduction of an elliptic curve $E$ over a $p$-adic field via the action of inertia on the $\ell$-adic Tate module. However, the action of inertia on $E[\ell]$ is not sufficient to distinguish between potentially good and multiplicative reduction, and the action on $T_{\ell}(E)$ is not sufficient to determine the Kodaira type. We remedy this situation by endowing $E[\ell]$ with a distance function that records the $p$-adic distances between the $x$-coordinates of the points. We show that, equipped with this additional structure, $E[\ell]$ determines the Kodaira type of the elliptic curve. In the case of residue characteristic $2$, we assume that $E$ does not have potentially good reduction of type $I_n^*$.
		\end{abstract}

    \maketitle

  \addtocontents{toc}{\protect\setcounter{tocdepth}{1}}
    \section{Introduction}\label{intro}

    The Kodaira type of an elliptic curve over a local field is a fundamental invariant of its reduction. Classically, it may be determined from a Weierstrass equation using Tate's algorithm. Meanwhile, the N\'{e}ron-Ogg-Shafarevich criterion determines good reduction via the action of inertia on the $\ell$-adic Tate module, but does not in general determine the Kodaira type. In fact, the inertia action on $E[\ell]$ does not even determine whether $E$ has potentially good or potentially multiplicative reduction.

    The purpose of this paper is to show that the $\ell$-torsion points $E[\ell]$ actually do contain a lot more information about the reduction of a curve. Let $K$ be a non-archimedean local field and let $E/K$ be an elliptic curve given by a Weierstrass equation of the form $y^2+a_1xy+a_3y=x^3+a_2x^2+a_4x+a_6$. Let $$\mathcal{X}_\ell := \{ x(P) \mid P \in E[\ell] \setminus \{\mathcal{O}_E\} \}$$ denote the set of distinct $x$-coordinates of the non-trivial $\ell$-torsion points of $E$, or equivalently, the roots of the $\ell$-division polynomial of $E$. Endowing $\mathcal X_\ell$ with the non-archimedean distance $d(x_1,x_2)=v_K(x_1-x_2)$, induced by the valuation of the local field $K$, we find that the arrangement of points in $\mathcal{X}_{\ell}$ already determine whether $E$ has potentially good or potentially multiplicative reduction.

    \begin{thm}[=Theorem \ref{thm:potential_red}]\label{thm:pot_intro}      
      Let $K$ be a non-archimedean local field of residue characteristic $p$. Let $E/K$ be an elliptic curve given by a Weierstrass equation, and let $\ell \ne 2,p$ be a prime. Then $E$ has potentially good reduction if and only if the points of $\mathcal{X}_{\ell}$ are equidistant. Equivalently, $E$ has potentially multiplicative reduction if and only if the points of $\mathcal{X}_{\ell}$ are not equidistant. 
    \end{thm}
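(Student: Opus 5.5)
Throughout, "equidistant" means that $v_K(x_1-x_2)$ takes the same value for all pairs of distinct $x_1,x_2\in\mathcal{X}_\ell$. This property depends only on the set $\mathcal{X}_\ell$, and it is preserved under two operations. It is unchanged by a finite extension of $K$, since the normalised valuation is only rescaled. It is also unchanged by Weierstrass coordinate changes $x\mapsto u^2x+r$, because every distance shifts by the same constant $2v(u)$ and translation by $r$ has no effect. So we may pass to a finite extension $L/K$ over which $E$ has either good or split multiplicative reduction, and work with a convenient model there. Both directions of the theorem then follow from computing the distances in these two cases.

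\emph{Potentially good case.} Over $L$, take a minimal model with good reduction $\tilde E$. Since $\ell\neq p$, reduction is injective on $E[\ell]$, so the $\ell$-torsion points are integral and reduce to distinct points of $\tilde E[\ell]$. Two points $P,Q$ have congruent $x$-coordinates modulo the maximal ideal only if $\tilde P=\pm\tilde Q$. This forces $P=\pm Q$, and then $x(P)=x(Q)$. Hence the elements of $\mathcal{X}_\ell$ are integral with pairwise distinct reductions, and every distance equals $0$.

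\emph{Potentially multiplicative case.} Over $L$, use the Tate uniformisation $E\cong \mathbb{G}_m/q^{\Z}$ with $v(q)>0$, in a model where
\[x(u)=\sum_{n\in\Z}\frac{q^nu}{(1-q^nu)^2}-2\sum_{n\ge1}\frac{nq^n}{1-q^n}.\]
This model is related to the original one by an admissible change of variables, so by the invariance above it suffices to compute distances here. The $\ell$-torsion is represented by $u=\zeta^a q^{b/\ell}$ with $0\le b<\ell$.

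For $b=0$ and $\zeta^a\ne1$, the leading term is $x\approx \zeta^a/(1-\zeta^a)^2$. This is a unit, and the values are pairwise distinct mod the maximal ideal for distinct pairs $\{a,-a\}$, because $\ell\ne p$ and $t\mapsto t/(1-t)^2$ identifies exactly $t$ and $t^{-1}$. So these $(\ell-1)/2$ points are at mutual distance $0$.

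For $0<b<\ell/2$ (using $u\mapsto u^{-1}$ to assume this), the dominant term is $u=\zeta^aq^{b/\ell}$, so $x=\zeta^aq^{b/\ell}+O(q^{\min(2b,\ell-b)/\ell})$. For fixed $b=1$ and different $a$, the differences have valuation $v(q)/\ell>0$. Since $\ell\ge3$, there are at least two points in $\mathcal{X}_\ell$ at distance $0$ and at least two at positive distance, so $\mathcal{X}_\ell$ is not equidistant. Note that $\ell\neq2$ is needed here: for $\ell=2$ the set $\mathcal{X}_2$ has only three elements, and a naive count breaks down.

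\emph{Main obstacle.} The main difficulty is making the Tate-curve expansions rigorous, in particular controlling the error terms so that the leading terms really determine the valuations. One also has to check that passing from the given Weierstrass equation to the Tate model is an admissible affine change in $x$, which it is: any two Weierstrass models differ by $x\mapsto u^2x+r$. Finally, the two reduction types are exhaustive, so the "if and only if" follows, and the second formulation in the theorem is simply its contrapositive.
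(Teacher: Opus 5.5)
Your argument is correct and has the same skeleton as the paper's proof. Distances shift uniformly by $2v(u)$ under $x\mapsto u^2x+r$ (Lemma~\ref{lem:transformation}), so one may pass to a semistable model over a finite extension. The good case then follows from injectivity of reduction on $E[\ell]$, exactly as in Proposition~\ref{prop:base_cases}(i), and the multiplicative case is handled by Tate uniformisation.

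The real difference is how you compute distances on the Tate curve. The paper uses the product identity \eqref{eq:x-difference-theta}, which writes $X(u_i,q)-X(u_j,q)$ in terms of $\theta$-values. It checks that all four $\theta$-factors are units for the representatives $\zeta_\ell^r q^{s/\ell}$ with $0\le s\le\lfloor \ell/2\rfloor$. From this it reads off $v(x_i-x_j)=\min(s_i,s_j)\,n/\ell$ for every pair at once, which gives the full nested shape and the relative depths $n/\ell$ used later to recover the Kodaira type. You instead take leading terms of the $q$-series for $X(u,q)$. This avoids the $\theta$ identity, and you only exhibit one pair at distance $0$ and one pair at positive distance, which is all this statement needs. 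Nothing is lost by this: your estimate $x=\zeta^aq^{b/\ell}+O(q^{\min(2b,\ell-b)/\ell})$ for $0<b<\ell/2$, together with the unit leading term when $b=0$, already yields $v(x_i-x_j)=\min(b_i,b_j)\,v(q)/\ell$ for all pairs.

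One small slip to fix. For $\ell=3$ there is only one point with $b=0$, so the zero-distance pair cannot come from ``these $(\ell-1)/2$ points''. Take instead the $b=0$ point and any $b=1$ point. They are at distance $0$ because the former is a unit and the latter has positive valuation; both facts are already in your write-up.
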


    In fact, the distances between the points of $\mathcal X_\ell$ determine much more. Our main theorem shows that they recover the Kodaira type.

    \begin{thm}\label{thm:exact_kodaira_intro} 
      Let $K$ be a non-archimedean local field of residue characteristic $p$. Let $E/K$ be an elliptic curve given by a Weierstrass equation. For $p\ge 5$, the distances between points in $\mathcal{X}_{\ell}$ determine the Kodaira type of the special fibre. For $p=2,3$, assuming that $E$ does not have potentially good reduction of type $I_n^*$ (which can occur only when $p=2$), the distances along with the inertia module $E[\ell]$ determine the Kodaira type.
    \end{thm}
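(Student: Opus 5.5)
Since the Kodaira type is invariant under unramified extension, I would first replace $K$ by its maximal unramified extension (or a finite unramified extension that is large enough). I then split the argument using Theorem \ref{thm:pot_intro}: the $\mathcal X_\ell$ points are equidistant exactly when $E$ has potentially good reduction. This gives two cases, and in each one I want to read off the Kodaira type from quantities that can be computed from the metric space $\mathcal X_\ell$. The minimal-model invariant $v(\Delta_{\min})$ is the natural candidate, together with the conductor exponent, which is visible from the inertia module $E[\ell]$ when $p=2,3$.

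\emph{Potentially multiplicative case.} Over a finite extension $E$ is a Tate curve $\mathbb G_m/q^{\Z}$. The $x$-coordinates of $\ell$-torsion are given by the explicit series $x(u)=\sum_n q^nu/(1-q^nu)^2-2\sum q^n/(1-q^n)^2$, evaluated at $u=\zeta^a q^{b/\ell}$. I would compute the valuations of the differences. The $\ell-1$ points with $b=0$ form one cluster, at mutual distance controlled by the $\ell$-th roots of unity, so essentially the distance is $0$ after scaling. The points with $b\neq 0$ split into clusters whose depths are $\frac{b}{\ell}v(q)$-type quantities, up to the quadratic twist and the change of Weierstrass model. I then compare the depth of the outer cluster with the inner depths. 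The ratio (inner depth minus outer depth) recovers $v(q)=-v(j)=n$, and it is independent of the model, because changing the model acts by an affine map $x\mapsto u^2x+r$, which shifts all distances uniformly. Whether the twist is unramified or ramified, i.e.\ $I_n$ versus $I_n^*$, is then detected by the absolute normalisation: after minimisation, the outer distance equals $0$ or $v(\text{ramified twist})$. Concretely, I would compare the outer distance with $\frac{1}{12}v(\Delta)$-type normalisations computed from the distances themselves. The cleanest route is to note that $\Delta$ is, up to a constant, a product of differences of roots of the $\ell$-division polynomial (its discriminant). Hence $v(\Delta)$ modulo $12$, which is the model-independent part, is determined by the distances, and $v(\Delta_{\min})=n$ or $n+6$ (with $p=2$ corrections) separates $I_n$ from $I_n^*$.

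\emph{Potentially good case.} Here all points are equidistant at some common distance $d$. Using the discriminant of the division polynomial again, $v(\Delta)$ is an explicit function of $d$ and of the leading coefficient; an $\ell$-dependent constant accounts for $\ell^2$-type leading terms, which are units because $\ell\ne p$. Working modulo $12$ (change of model multiplies $\Delta$ by $u^{12}$ and shifts $d$ by $2v(u)$), I recover $v(\Delta_{\min})\bmod 12\in\{0,2,3,4,6,8,9,10\}$. For $p\ge5$ this determines the type $I_0,II,III,IV,I_0^*,IV^*,III^*,II^*$ directly, since the type is determined by $v(\Delta_{\min})$ when $j$ is integral. For $p=2,3$ the valuation no longer determines the type. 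I would add the inertia module $E[\ell]$, whose image gives the semistability defect $e$ with $\#\Phi(I_K)$ equal to $2,3,4,6,8,12,24$. I would then use the known tables (Kraus, Dokchitser–Dokchitser) listing the Kodaira type as a function of $(v(\Delta_{\min})\bmod12,\ \text{inertia image})$ in residue characteristic $2,3$, excluding the potentially good $I_n^*$ cases, which cannot be separated this way.

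The main obstacle is this last step for $p=2,3$, and also verifying the precise formula relating the common distance $d$ to $v(\Delta)$ for wild reductions. The discriminant of the division polynomial is the right tool, but I need its exact form: $\mathrm{disc}(\psi_\ell)$ equals a power of $\ell$ times $\Delta^{(\ell^2-1)(\ell^2-3)/\ldots}$ up to a unit. I would establish this first, via the Tate curve or by universality over $\Z[a_i]$. After that, the table check for wild cases is a finite but delicate case analysis.
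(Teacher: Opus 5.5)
Your tame cases are sound and are essentially the paper's argument in a different form. The universality and weight argument gives $\mathrm{disc}(\psi_\ell)=\pm\ell^{a}\Delta^{(\ell^2-1)(\ell^2-3)/24}$, so the distances determine $v_K(\Delta)$ of the given model. In the equidistant case this gives $v_K(\Delta)=6d$ for every $p$, so the relation between $d$ and $v(\Delta)$ is not an obstacle in the wild case; the paper gets the same fact from Lemma~\ref{lem:potentially_good} and Lemma~\ref{lem:transformation}. For $p\neq 2$, your test $v_K(\Delta)-\ell\delta_1\equiv 0$ or $6\pmod{12}$ is equivalent to the paper's parity test on $d_{\mathcal X_\ell}$.

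The gap is the potentially good case for $p=2,3$. You defer it to a table giving the Kodaira type as a function of $(v_K(\Delta_{\min})\bmod 12,\ \Phi)$, and this does not work:
\begin{itemize}
\item No such table is available. The known descriptions (Kraus, Papadopoulos) are in terms of valuations of $c_4,c_6,\Delta$ on a minimal model, not in terms of $\Phi$.
\item The abstract group $\Phi$ does not record the wild ramification breaks, and these are what govern how large $v_K(\Delta_{\min})$ is.
\item The possible breaks grow with the absolute ramification of $K$, so this would not be a finite check over all local fields.
\end{itemize}
The missing idea is to read off the conductor exponent $v_K(N)$ from the inertia module $E[\ell]$; its Swan conductor equals that of $T_\ell E$ since $\ell\neq p$. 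Then apply Ogg's formula $v_K(\Delta_{\min})-v_K(N)=m-1$. Away from potentially good $I_n^*$ one has $m-1\in\{0,1,2,4,6,7,8\}$, so $v_K(\Delta_{\min})$ lies in the window $[v_K(N),v_K(N)+8]$, which has length less than $12$. The residue $6d\bmod 12$ therefore pins down $v_K(\Delta_{\min})$ exactly, and $m-1$ then gives the type.

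Your potentially multiplicative argument also fails at $p=2$, and ``with $p=2$ corrections'' hides two separate problems.
\begin{itemize}
\item Suppose $E$ has type $I_n^*$ and becomes multiplicative over a ramified quadratic $L/K$ with $s=s_{L/K}$. Then $v_K(N)=2s$, and Ogg's formula gives $v_K(\Delta_{\min})=\nu+6s$ with $\nu=-v_K(j)=\ell\delta_1$. For even $s$ (e.g.\ $L=\mathbb{Q}_2(\sqrt{-1})$, $s=2$) this is $\equiv\nu\pmod{12}$, exactly as for type $I_\nu$. Equivalently, the outer depth on a minimal model is $s$, which is even. So no distance data can separate $I_\nu$ from $I_n^*$, and you must use whether $\bar\rho(I_K)$ is unipotent, as the paper does.
\item $\ell\delta_1$ recovers $\nu$, not the Kodaira index. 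For $I_n^*$ at $p=2$ one has $n=\nu+4(s_{L/K}-1)$ (Lorenzini), so you also need to extract $L$, and hence $s_{L/K}$, from the inertia action on $E[\ell]$.
\end{itemize}
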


    It turns out that the distances between these $\ell$-torsion points alone do not always determine the Kodaira type for residue characteristics $2$ and $3$, and Theorem \ref{thm:exact_kodaira_intro} shows that we can recover it by requiring the action of inertia on $E[\ell]$ (see Example \ref{ex:2}). This parallels the classical Néron-Ogg-Shafarevich criterion, in that inertia still plays a role in detecting reduction, but only through its action on the finite group $E[\ell]$ rather than on the full $\ell$-adic Tate module. We also note that, in residue characteristics $2$ and $3$, Kraus \cite{Kraus} and Papadopoulos \cite{Papa} give descriptions of the Kodaira type in terms of valuations of invariants. Their methods, however, require first passing to a minimal Weierstrass model. By contrast, these $\ell$-torsion distances may be computed directly from an arbitrary Weierstrass model.

    The main tool in this paper is the notion of an $\ell$-torsion cluster picture, motivated by the cluster picture formalism of Dokchitser, Dokchitser, Maistret, and Morgan \cite{M2D2} (from now on referred to as ``standard cluster pictures'') for hyperelliptic curves. There, one studies the non-archimedean distances between Weierstrass points, obtaining a combinatorial object that encodes arithmetic information, including reduction types, minimal regular models, root numbers, and Galois representations. The resulting theory has found numerous applications, for instance to parity conjectures for abelian surfaces \cite{abeliansurface}, the construction of surjective Galois representations \cite{surjectivegalois}, local heights in quadratic Chabauty \cite{LocalHeights}, and questions concerning the Tate--Shafarevich group \cite{BoundSha,largesha}. 

    Before stating the main theorem, we briefly describe the $\ell$-torsion cluster pictures underlying our approach. For readers familiar with the cluster picture formalism of \cite{M2D2}, this construction may be viewed as an analogue obtained by replacing the roots of a polynomial $f(x)$ with those of the $\ell$-division polynomial of $E$. For other readers, it suffices to think of this as representing the non-archimedean distances on $\mathcal{X}_{\ell}$. More precisely, the $\ell$-torsion cluster picture records the distances $d(x_1,x_2)=v_K(x_i-x_j)$ for $x_i,x_j \in \mathcal{X}_{\ell}$, and can equivalently be viewed as the matrix of pairwise non-archimedean distances between these points. We postpone the precise definitions of clusters to Section \ref{sec:clusterpics}. For now, we illustrate the construction with a few examples.

    \begin{ex}\label{ex:intro1}
    Let $E: y^2 + y = x^3 + x^2 - x$ be an elliptic curve over $\Q_7$ and let $\ell=3$. Using Magma \cite{MAGMA}, one computes the $3$-division polynomial of $E$ as $3x^4 + 4x^3 - 6x^2 + 3x$. Writing $\alpha =\sqrt[3]{35}$, its roots, corresponding to the $x$-coordinates of the non-trivial $3$-torsion points, are
    $$x_1:=0, \quad x_2:=\frac{1}{9}(-4-2\alpha-\alpha^2), \quad x_3:=\frac{1}{18}(\alpha - 2)(4+\alpha - \sqrt{-3}\alpha), \quad x_4:=\frac{1}{18}(\alpha - 2)(4+\alpha + \sqrt{-3}\alpha).$$
    Let $M=\Q_7(E[3])/\Q_7$. A direct computation shows that $v_M(x_1-x_i)=0$ for $i=2,3,4$, while $v_M(x_i-x_j)=1$ for all distinct $i,j\in\{2,3,4\}$. Thus, $x_2,x_3$, and $x_4$ are pairwise closer to each other than any of them is to $x_1$. To compute the distances over $\Q_7$, we divide the valuations by $e_{M/\Q_7}$, which is $3$. One can represent this pictorially via a cluster picture as     
    \scalebox{1.3}{\clusterpicture
    \Root[D] {} {first} {r1};
    \Root[D] {1} {r1} {r2};
    \Root[D] {}{r2}{r3};
    \Root[D] {}{r3}{r4};
    \ClusterLD c1[][1/3] = (r2)(r3)(r4);
    \ClusterLD c2[][0] = (r1)(c1);
    \endclusterpicture} (on how to draw a cluster picture, see Section \ref{sec:clusterpics}). Then, from Theorem \ref{thm:pot_intro}, it follows that $E$ has potentially multiplicative reduction. Moreover, by Theorem \ref{thm:kodara_intro} (see below), scaling the second smallest distance by $\ell=3$ yields $1$, and so the curve has Kodaira type $I_1$ at $p=7$. 
    \end{ex}

    Example \ref{ex:intro1} is representative of the general pattern, and the $\ell$-torsion cluster picture of a curve with potentially multiplicative reduction always has the form drawn below. Readers familiar with standard cluster pictures may find the diagram a useful guide to Theorem \ref{thm:kodara_intro}, which gives an equivalent description purely in terms of the associated distances. For the unfamiliar reader, this schematic may be revisited after the definitions in Section \ref{sec:clusterpics}.
    
    \vspace{0.2cm}

\begin{center}

    
  \tikzset {_hitfwvsm1/.code = {\pgfsetadditionalshadetransform{ \pgftransformshift{\pgfpoint{89.1 bp } { -128.7 bp }  }  \pgftransformscale{1.32 }  }}}
  \pgfdeclareradialshading{_xvezydr7i}{\pgfpoint{-72bp}{104bp}}{rgb(0bp)=(1,1,1);
  rgb(0bp)=(1,1,1);
  rgb(5.165018396718161bp)=(0,0,1);
  rgb(400bp)=(0,0,1)}
  
    
  \tikzset {_z1uwl8zrq/.code = {\pgfsetadditionalshadetransform{ \pgftransformshift{\pgfpoint{89.1 bp } { -128.7 bp }  }  \pgftransformscale{1.32 }  }}}
  \pgfdeclareradialshading{_5briogy58}{\pgfpoint{-72bp}{104bp}}{rgb(0bp)=(1,1,1);
  rgb(0bp)=(1,1,1);
  rgb(5.204080896718161bp)=(0,0,1);
  rgb(400bp)=(0,0,1)}
  
    
  \tikzset {_apxz9qq6r/.code = {\pgfsetadditionalshadetransform{ \pgftransformshift{\pgfpoint{89.1 bp } { -128.7 bp }  }  \pgftransformscale{1.32 }  }}}
  \pgfdeclareradialshading{_ccectyjl5}{\pgfpoint{-72bp}{104bp}}{rgb(0bp)=(1,1,1);
  rgb(0bp)=(1,1,1);
  rgb(5.165018396718161bp)=(0,0,1);
  rgb(400bp)=(0,0,1)}
  
    
  \tikzset {_2n5si7j4b/.code = {\pgfsetadditionalshadetransform{ \pgftransformshift{\pgfpoint{89.1 bp } { -128.7 bp }  }  \pgftransformscale{1.32 }  }}}
  \pgfdeclareradialshading{_zktknxges}{\pgfpoint{-72bp}{104bp}}{rgb(0bp)=(1,1,1);
  rgb(0bp)=(1,1,1);
  rgb(5.204080896718161bp)=(0,0,1);
  rgb(400bp)=(0,0,1)}
  
    
  \tikzset {_30idcifjt/.code = {\pgfsetadditionalshadetransform{ \pgftransformshift{\pgfpoint{89.1 bp } { -128.7 bp }  }  \pgftransformscale{1.32 }  }}}
  \pgfdeclareradialshading{_f8dtnjso9}{\pgfpoint{-72bp}{104bp}}{rgb(0bp)=(1,1,1);
  rgb(0bp)=(1,1,1);
  rgb(5.165018396718161bp)=(0,0,1);
  rgb(400bp)=(0,0,1)}
  
    
  \tikzset {_1mve3ghwt/.code = {\pgfsetadditionalshadetransform{ \pgftransformshift{\pgfpoint{89.1 bp } { -128.7 bp }  }  \pgftransformscale{1.32 }  }}}
  \pgfdeclareradialshading{_2xc62b979}{\pgfpoint{-72bp}{104bp}}{rgb(0bp)=(1,1,1);
  rgb(0bp)=(1,1,1);
  rgb(5.204080896718161bp)=(0,0,1);
  rgb(400bp)=(0,0,1)}
  \tikzset{every picture/.style={line width=0.75pt}} 
  
  \begin{tikzpicture}[x=0.75pt,y=0.75pt,yscale=-1,xscale=1]
  
  \draw  [line width=1.5]  (10.14,39.14) .. controls (10.14,20.37) and (25.37,5.14) .. (44.14,5.14) -- (204.93,5.14) .. controls (223.71,5.14) and (238.93,20.37) .. (238.93,39.14) -- (238.93,39.14) .. controls (238.93,57.92) and (223.71,73.14) .. (204.93,73.14) -- (44.14,73.14) .. controls (25.37,73.14) and (10.14,57.92) .. (10.14,39.14) -- cycle ;
  \draw  [line width=1.5]  (22.95,38.93) .. controls (22.95,25.85) and (33.55,15.24) .. (46.64,15.24) -- (69.66,15.24) .. controls (82.74,15.24) and (93.34,25.85) .. (93.34,38.93) -- (93.34,38.93) .. controls (93.34,52.01) and (82.74,62.61) .. (69.66,62.61) -- (46.64,62.61) .. controls (33.55,62.61) and (22.95,52.01) .. (22.95,38.93) -- cycle ;
  \draw  [line width=1.5]  (18.64,38.89) .. controls (18.64,23.84) and (30.84,11.64) .. (45.89,11.64) -- (91.95,11.64) .. controls (107,11.64) and (119.2,23.84) .. (119.2,38.89) -- (119.2,38.89) .. controls (119.2,53.94) and (107,66.14) .. (91.95,66.14) -- (45.89,66.14) .. controls (30.84,66.14) and (18.64,53.94) .. (18.64,38.89) -- cycle ;
  \draw  [line width=1.5]  (14.34,38.9) .. controls (14.34,21.95) and (28.08,8.21) .. (45.03,8.21) -- (145.91,8.21) .. controls (162.86,8.21) and (176.6,21.95) .. (176.6,38.9) -- (176.6,38.9) .. controls (176.6,55.85) and (162.86,69.59) .. (145.91,69.59) -- (45.03,69.59) .. controls (28.08,69.59) and (14.34,55.85) .. (14.34,38.9) -- cycle ;
  \draw  [draw opacity=0][shading=_xvezydr7i,_hitfwvsm1][line width=0.75]  (80.31,42.25) .. controls (80.29,45.07) and (77.99,47.35) .. (75.17,47.33) .. controls (72.35,47.31) and (70.08,45.02) .. (70.1,42.2) .. controls (70.11,39.38) and (72.41,37.1) .. (75.23,37.12) .. controls (78.05,37.14) and (80.32,39.43) .. (80.31,42.25) -- cycle ;
  \draw  [draw opacity=0][shading=_5briogy58,_z1uwl8zrq][line width=0.75]  (47.31,42.04) .. controls (47.29,44.86) and (44.99,47.13) .. (42.17,47.12) .. controls (39.35,47.1) and (37.08,44.8) .. (37.1,41.98) .. controls (37.11,39.17) and (39.41,36.89) .. (42.23,36.91) .. controls (45.05,36.92) and (47.32,39.22) .. (47.31,42.04) -- cycle ;
  \draw  [draw opacity=0][shading=_ccectyjl5,_apxz9qq6r][line width=0.75]  (167.81,42.06) .. controls (167.79,44.88) and (165.49,47.15) .. (162.67,47.14) .. controls (159.85,47.12) and (157.58,44.82) .. (157.6,42) .. controls (157.61,39.18) and (159.91,36.91) .. (162.73,36.93) .. controls (165.55,36.94) and (167.82,39.24) .. (167.81,42.06) -- cycle ;
  \draw  [draw opacity=0][shading=_zktknxges,_2n5si7j4b][line width=0.75]  (134.81,41.85) .. controls (134.79,44.67) and (132.49,46.94) .. (129.67,46.92) .. controls (126.85,46.91) and (124.58,44.61) .. (124.6,41.79) .. controls (124.61,38.97) and (126.91,36.7) .. (129.73,36.72) .. controls (132.55,36.73) and (134.82,39.03) .. (134.81,41.85) -- cycle ;
  \draw  [draw opacity=0][shading=_f8dtnjso9,_30idcifjt][line width=0.75]  (230.31,41.56) .. controls (230.29,44.38) and (227.99,46.65) .. (225.17,46.64) .. controls (222.35,46.62) and (220.08,44.32) .. (220.1,41.5) .. controls (220.11,38.68) and (222.41,36.41) .. (225.23,36.43) .. controls (228.05,36.44) and (230.32,38.74) .. (230.31,41.56) -- cycle ;
  \draw  [draw opacity=0][shading=_2xc62b979,_1mve3ghwt][line width=0.75]  (197.31,41.35) .. controls (197.29,44.17) and (194.99,46.44) .. (192.17,46.42) .. controls (189.35,46.41) and (187.08,44.11) .. (187.1,41.29) .. controls (187.11,38.47) and (189.41,36.2) .. (192.23,36.22) .. controls (195.05,36.23) and (197.32,38.53) .. (197.31,41.35) -- cycle ;
  
  \draw (59.75,17.09) node [anchor=north west][inner sep=0.75pt]  [font=\scriptsize]  {$\ell $};
  \draw (96,37) node [anchor=north west][inner sep=0.75pt]    {$\dotsc $};
  \draw (89.71,51) node [anchor=north west][inner sep=0.75pt]  [font=\scriptsize]  {$\delta _{k}$};
  \draw (173.13,51) node [anchor=north west][inner sep=0.75pt]  [font=\scriptsize,rotate=-0.87]  {$\delta _{1}$};
  \draw (239.01,51) node [anchor=north west][inner sep=0.75pt]  [font=\scriptsize]  {$\delta _{0}$};
  \draw (74,22) node [anchor=north west][inner sep=0.75pt]  [font=\huge,rotate=-90.68]  {$\{$};
  \draw (48,37) node [anchor=north west][inner sep=0.75pt]    {$\dotsc $};
  \draw (147.25,16.9) node [anchor=north west][inner sep=0.75pt]  [font=\scriptsize]  {$\ell $};
  \draw (161,22) node [anchor=north west][inner sep=0.75pt]  [font=\huge,rotate=-90.68]  {$\{$};
  \draw (136,37) node [anchor=north west][inner sep=0.75pt]    {$\dotsc $};
  \draw (194.25,13) node [anchor=north west][inner sep=0.75pt]  [font=\scriptsize]  {$\lfloor \ell /2\ \rfloor $};
  \draw (224,22) node [anchor=north west][inner sep=0.75pt]  [font=\huge,rotate=-90.68]  {$\{$};
  \draw (199,37) node [anchor=north west][inner sep=0.75pt]    {$\dotsc $};

  \end{tikzpicture}

\end{center}
\vspace{0.2cm}
Here, $k=\lfloor \ell/2 \rfloor$, and this description can be obtained from the proof of Theorem \ref{thm:pot_intro}. One may observe that the roots of the $\ell$-division polynomial group into a sequence of nested clusters. The numbers $\delta_{i}$ represent the relative depth (see Definition \ref{defn:relative depth}). As we will show in the proof of Theorem \ref{thm:kodara_intro}, $\delta_{i}=n/\ell$ for all $1 \le i \le  k$ (where $n=-v_K(j(E))$) and determine the resulting Kodaira type. The following theorem makes this determination explicit.
    
    \begin{thm}\label{thm:kodara_intro}
      Let $K$ be a non-archimedean local field with residue characteristic $p$. Let $E/K$ be an elliptic curve given by a Weierstrass equation $y^2+a_1xy+a_3y= x^3 +a_2x^2 + a_4x + a_6$. Let $\mathcal{X}_{\ell}:= \{ x(P) \mid P \in E[\ell] \setminus \{\mathcal{O}_E\} \}$ denote the set of roots of the $\ell$-division polynomial of $E$, endowed with the non-archimedean distance $d(x_1,x_2)=v_K(x_1-x_2)$. 

      \begin{enumerate}[label=(\alph*)]
        \item Suppose $E$ has potentially good reduction. Then, the points in $\mathcal{X}_{\ell}$ of $E$ are all equidistant with distance $d$. Assume the special fibre is not of type $I_n^*$, which could only possibly occur at $p=2$. The Kodaira type of its special fibre is determined by $d$ and $\eta$ where
        $$\eta=\left\{\begin{array}{ll}
          2 & \text { if $v_K(N)=0$ }\\
          v_K(N) & \text { otherwise,}
          \end{array}\right.
          $$
          and $N$ denotes the conductor of $E$, as follows:

           \vspace{0.1in}
       \begin{center}
          \renewcommand{\arraystretch}{1.2}
          \begin{tabular}{|c|c|c|c|c|c|c|c|c|}
            
            \hline
            Kodaira type   & $I_0$ & $II$ & $III$ & $IV$ & $I_0^*$  & $IV^*$ & $III^*$ & $II^*$ \\
            \hline
            $6d -\eta+2 \mod 12 $ & $0$ & $2$ & $3$ & $4$ & $6$ & $8$ & $9$ & $10$ \\
            \hline
          \end{tabular}
      \end{center}
      \vspace{0.1in}

        \item  Suppose $E$ has potentially multiplicative reduction. Then, the points in $\mathcal{X}_{\ell}$ are not equidistant. Let $d_0$ and $d_1$ be the two smallest (distinct) distances, with $d_0<d_1$, and let $\delta_1=d_1-d_0$.         

    \begin{enumerate}[label=(\roman*)]
      \item $(p \ne 2)$:  
      The Kodaira type can be inferred from $d_0$ and $\delta_1$ as below:
      
      \vspace{0.1in}
           \begin{center}
              \renewcommand{\arraystretch}{1.2}
              \begin{tabular}{|c|c|c|}
                
                \hline
                Kodaira type   & $I_n$ & $I_n^*$ \\
                \hline
                $d_0 \mod 2 $ & $0$ & $1$ \\
                \hline
                $\delta_1$ & $n/\ell$ & $n/\ell$ \\
                \hline
              \end{tabular}
          \end{center}
          \vspace{0.1in}

          \item $(p=2)$: If the curve has multiplicative reduction, then $\delta_1$ is $n/\ell$, yielding the integer $n$ for type $I_n$. If the curve has additive reduction, then let $s_{L/K}$ be the valuation of the different of the extension $L/K$ where $L$ is the minimal extension over which $E$ acquires semistable reduction. If $\delta_1=n/\ell$, then the Kodaira type at $p=2$ is given by $I_{n+4(s_{L/K}-1)}^*$. 
    \end{enumerate}
      \end{enumerate}     
    \end{thm}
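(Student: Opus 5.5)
Since the distances $v_K(x_i-x_j)$ change only by a common additive constant under $x\mapsto u^2x+r$, the natural first move is to reduce to a model where the $x$-coordinates are explicit. A change of Weierstrass model shifts every distance by the same amount $2v_K(u)$, and $v_K(\Delta)$ shifts by $12v_K(u)$. So in each case I will write the invariants in terms of a minimal model and then track how $d$ (resp.\ $d_0$) and $v_K(\Delta)$ move together. For $p\neq 2$ I complete the square so that $a_1=a_3=0$; for $p=2$ I keep the general form.

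For part (a) I pass to the extension $L/K$ over which $E$ acquires good reduction, with minimal model $E'$ over $L$. There the nontrivial $\ell$-torsion reduces injectively, because $\ell\neq p$ and reduction is injective on prime-to-$p$ torsion. Hence the $x$-coordinates of distinct non-inverse points have distinct reductions, and all pairwise distances in $\mathcal X_\ell$ for $E'$ are $0$. Pulling back via $x=u^2x'+r$ gives equidistance, with $d=2v_K(u)$. We also have $v_K(\Delta)=12v_K(u)$, where $\Delta$ is the discriminant of the given model, since $E'$ has unit discriminant. For a minimal model over $K$ this gives $v_K(\Delta_{\min})\equiv 6d \pmod{12}$, and for an arbitrary model it holds up to a multiple of $12$. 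I then apply Ogg's formula $v(\Delta_{\min})=v(N)+m-1$, where $m$ is the number of components. Combining this with the Kodaira table for $m$ in terms of type, together with the usual convention for $I_0$ that produces $\eta=2$, yields $6d-\eta+2\equiv v(\Delta_{\min})-v(N)+2=m+1$. Matching against the Kodaira list, $m+1$ runs through $2,3,4,5,\dots$, and I would check the table's values column by column; the types $I_n^*$ are excluded because there $m$ is not recovered modulo $12$. For $p\ge 5$ we have $v(N)\in\{0,2\}$, so $\eta$ is determined by $d$ via $v(\Delta)$. This is the routine bookkeeping step.

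For part (b) I use Tate's uniformization. Over at most a quadratic extension $K'$ (unramified when the reduction is multiplicative, ramified when it is $I_n^*$), $E\cong E_q$ with $v(q)=n$. The $\ell$-torsion is then $\zeta_\ell^a q^{b/\ell}$, and I use the explicit series $x(w)=\sum_{m\in\Z} q^mw/(1-q^mw)^2-2s_1(q)$. The $x$-coordinates of $\zeta^a$ with $a\neq0$ are distinct units at mutual distance $0$, because $\ell\neq p$. For $w=\zeta^a q^{b/\ell}$ with $0<b\le \lfloor \ell/2\rfloor$ one has $x(w)\approx w$, of valuation $bn/\ell$, after pairing $b$ with $\ell-b$ using $x(w)=x(w^{-1})$. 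Differences within a level $b$ therefore have valuation $bn/\ell$, and across levels $\min$ of the two. This produces the nested picture with relative depths $n/\ell$, so $\delta_1=n/\ell$ in the Tate model. Transporting back to the given model via $x=u^2x_q+r$ shifts all distances by $2v_K(u)$, so $\delta_1$ is unchanged and $d_0=2v_K(u)$. If the twist is unramified, $u$ can be taken in $K$ up to a unit, and a minimal-model change has even valuation, so $d_0$ is even. In the ramified quadratic case with $p\neq 2$, $u$ involves $\sqrt{\pi}$, so $v_K(u)\in\tfrac12+\Z$ up to integral shifts, giving $d_0$ odd. This yields (i), where $n$ is read from $\delta_1$ in both cases, the type $I_n^*$ being attached to $v(j)=-n$.

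For (ii) with $p=2$, the same argument gives $\delta_1=n/\ell$. The Kodaira index is then obtained from Ogg's formula combined with the conductor of the quadratic twist: $v(N)=2+\text{(Swan)}$ with exponent $2s_{L/K}$, and $v(\Delta_{\min})=n+6\cdot(\text{twist contribution})$. This gives $m-1=v(\Delta_{\min})-v(N)$, hence the index $n+4(s_{L/K}-1)$.

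I expect the main obstacle to be this $p=2$ twisted case. There the discriminant change under a wildly ramified quadratic twist must be computed precisely in terms of the different, rather than read off from parity.
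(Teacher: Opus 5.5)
Your parts (a) and (b)(i) are fine and follow the paper's route: semistable base cases, transport by $x=u^2x'+r$, the congruence $v_K(\Delta_{\min})\equiv 6d \pmod{12}$ together with Ogg's formula, and the parity of $d_0$. Your two variations are harmless. You use the $q$-expansion of $X(w,q)$ where the paper uses the theta identity. You get the parity of $d_0$ from the explicit twist $x=\pi x_0$ where the paper uses $v_K(\Delta_{\min})=n+6$ from Tate's algorithm; all $K$-models share the parity of $d_0$, so you do not even need the twisted model to be minimal. (For $p\ge 5$, simply $\eta=2$ always.)

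\textbf{The gap: (b)(ii), additive case at $p=2$.} You write $v(\Delta_{\min})=n+6\cdot(\text{twist contribution})$ without ever determining the twist contribution, and that is the entire content of the claim. Put $\nu=-v_K(j(E))$, so that $\delta_1=\nu/\ell$. Let $I^*_\mu$ be the Kodaira type and $\chi$ the quadratic character of $L/K$. By the conductor--discriminant formula, $v_K(N)=2f(\chi)=2s_{L/K}$. Ogg's formula gives $v_K(\Delta_{\min})-v_K(N)=\mu+4$. Hence $\mu=\nu+4(s_{L/K}-1)$ holds \emph{if and only if} $v_K(\Delta_{\min})=\nu+6s_{L/K}$. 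Equivalently, the minimal $K$-model is related to the Tate model over $L$ by some $u$ with $v_L(u)=s_{L/K}$. As written, your derivation only restates the target in an equivalent form. You flag this yourself as the main obstacle, but the proposal does not resolve it.

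Nothing earlier in your argument supplies this identity. At $p=2$ you cannot complete the square over $\mathcal{O}_K$, and not every ramified quadratic extension has the form $K(\sqrt{\pi})$. So the explicit-twist argument from (b)(i) no longer controls $v_L(u)$; its value is governed by the ramification break of $L/K$, which is exactly what $s_{L/K}$ records. The cluster picture does not help either: for an arbitrary model only $d_0 \bmod 2$ is model-independent, and that does not determine $s_{L/K}$. The paper closes this step by citing Lorenzini's Theorem 2.8, which gives $\mu=\nu+4(s_{L/K}-1)$ for a curve of type $I^*_\mu$ that becomes $I_{2\nu}$ over $L$. To finish your route, you need either that result or a genuine computation of the minimal model of a wildly ramified quadratic twist of a Tate curve, establishing $v_K(\Delta_{\min})=\nu+6s_{L/K}$.
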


    Since the inertia action on $E[\ell]$ determines the conductor \cite[\S 10, IV]{SilII} and the extension $L$ (see \S \ref{subsec:differentiating}), it is evident that Theorem \ref{thm:exact_kodaira_intro} follows from Theorem \ref{thm:kodara_intro}.

    \begin{ex}\label{ex:2}
      Consider the elliptic curve given by $y^2=x^3+x^2+95x-1057$ over $\Q_2$. One computes $d_0=3$ and $\delta_1=4/3$, i.e., its $3$-torsion cluster picture is given by \scalebox{1.3}{
        \clusterpicture
        \Root[D] {} {first} {r1};
        \Root[D] {1} {r1} {r2};
        \Root[D] {}{r2}{r3};
        \Root[D] {}{r3}{r4};
        \ClusterLD c1[][4/3] = (r2)(r3)(r4);
        \ClusterLD c2[][3] = (r1)(c1);
        \endclusterpicture
      }.
      Since the torsion points are not equidistant, the curve has potentially multiplicative reduction. Let $M=\Q_2(E[3])$. As $e_{M/\Q_2}=6$, the inertia action on $E[3]$ contains an element of order $2$. Since every non-trivial unipotent element of $\mathrm{GL}_2(\F_3)$ has order $3$, inertia is not purely unipotent, and hence $E$ has Kodaira type $I_n^*$ for some $n$. The quadratic extensions $L$ over which $E$ acquires multiplicative reduction correspond to the index-$2$ subfields of $\Q_2(E[3])$, which are $\Q_2(\sqrt{-2})$ and $\Q_2(\sqrt{-10})$, corresponding to the split and non-split multiplicative cases. Both yield the same valuation of the different  $s_{L/\Q_2} = 3$. So, from Theorem \ref{thm:kodara_intro}, $E/\Q_2$ has Kodaira type $I_{12}^*$.

    \end{ex}

    \begin{ex}\label{ex:3}
      Note, we assume that at $p=2$, the curve is not potentially good of type $I_n^*$. One might ask if incorporating further representation-theoretic data could resolve this. Consider the elliptic curves given by LMFDB labels $100048.g1$ and $100048.n1$ over $\mathbb{Q}_2$, having reduction types $I_4^*$ and $II^*$ respectively \cite{lmfdb}. They have identical $\ell$-torsion cluster pictures and furthermore, both curves acquire good reduction over  $\mathbb{Q}_2(\sqrt{-1})$ with a $C_2$ inertia action. Consequently, for $\ell \neq 2$, their $\ell$-adic Tate modules are isomorphic over $\Q_2^{nr}$. 
    \end{ex}

    \subsection*{Outline of paper}
    In Section \ref{sec:clusterpics}, we introduce $\ell$-torsion cluster pictures using the framework of standard cluster pictures. In Section \ref{sec:base cases}, we prove Theorem \ref{thm:pot_intro}, the result on potential reduction type, by first determining the cluster picture in the semistable cases of good and multiplicative reduction, and then analysing how cluster pictures vary under changes of Weierstrass models to complete the proof. In Section \ref{sec:absolute_depths}, we compute the explicit distances on $\ell$-torsion cluster pictures for minimal models, and in Section \ref{sec:classification} we assemble these results to determine the Kodaira type, thereby proving Theorem \ref{thm:kodara_intro}.

    \section{Notation}\label{sec:notation}

Throughout this paper, we adopt the following notation for fields, elliptic curves, and cluster pictures.

\subsubsection*{Fields and Elliptic curves}

\begin{center}
\begin{tabular}{p{1.1cm} p{12cm}}
  $K$ &  A non-archimedean local field, with residue field $k$.\\

  $p$ & The residue characteristic of $K$\\
  $\ell$ & A prime distinct from the residue characteristic $p$ and $\ell \neq 2$. \\

\end{tabular}

\begin{tabular}{p{1.1cm} p{12cm}}

  $v_K$ & The discrete valuation on $K$, scaled such that $v_K(K^\times) = \mathbb{Z}$. \\

  $E/K$ & An elliptic curve defined over $K$. \\  
  $E[n]$ & The group of $n$-torsion points of $E(\overline{K})$ for any $n \in \Z_{> 0}$. \\

  $\Delta_{\min}$ & The minimal discriminant of $E/K$ with respect to $\mathcal{O}_K$. \\
  $v_K(N)$ & The valuation of the conductor of $E/K$. \\
  
  $e_{L/K}$ & The ramification index of an extension $L/K$. \\
  $s_{L/K}$ & Valuation of the different of an extension $\mathcal{O}_L/\mathcal{O}_K$, defined by Hilbert's formula $s_{L/K} = \sum_{i=0}^\infty (|H_i| - 1)$, where $H_i$ are the higher ramification groups.
\end{tabular}
\end{center}

\subsubsection*{Torsion and Cluster Pictures}

\begin{center}
  \begin{tabular}{p{1.1cm} p{12cm}} 
    $\mathcal{X}_{\ell}$ & The set of distinct $x$-coordinates of the non-zero $\ell$-torsion points of $E$, having cardinality $(\ell^2 - 1)/2$. \\

    $\mathfrak{s}$ & A \textit{cluster}, defined as a non-empty subset $\mathfrak{s} \subseteq \mathcal{X}_\ell$ of the form $\mathfrak{s} = D \cap \mathcal{X}_\ell$ for some disc $D = \{x \in \overline{K} \mid \frac{1}{e}v_L(x - z) \ge d\}$, some $z \in \overline{K}$, and $d\in \mathbb{Q}$. \\

    $d_{\mathfrak{s}}$ & The \textit{absolute depth} of a cluster $\mathfrak{s}$, defined as $\min_{x_i, x_j \in \mathfrak{s}} \frac{1}{e} v_L(x_i - x_j)$.\\

    $\delta_{\mathfrak{s}}$ & The \textit{relative depth} of a subcluster $\mathfrak{s} \subsetneq \mathcal{X}_{\ell}$. See Definition \ref{defn:relative depth}.\\
    
  \end{tabular}
\end{center}

\addtocontents{toc}{\protect\setcounter{tocdepth}{0}}
\section*{Acknowledgements}
The author would like to express their sincere gratitude to Vladimir Dokchitser for suggesting this problem and for his  support throughout this project. This work was supported by the London School of Geometry and Number Theory Centre for Doctoral Training (CDT), a joint venture between University College London, Imperial College London and King's College London.

    \addtocontents{toc}{\protect\setcounter{tocdepth}{2}}
    \section{\texorpdfstring{$\ell$}{}-torsion cluster pictures and results}\label{sec:clusterpics}

    Throughout, let $K$ be a non-archimedean local field  of residue characteristic $p$, with normalised valuation $v_K$. Let $E/K$ be an elliptic curve with Weierstrass equation $y^2+a_1xy+a_3y= x^3 +a_2x^2 + a_4x + a_6$, and fix a prime $\ell \neq p,2$. Let $M/K$ be the $\ell$-division field $M:=K(E[\ell])$, and write $e = [M:K^{\mathrm{unr}}]$ for the ramification index, where $K^{unr}$ is the maximal unramified extension of $K$. Let $v_M$ denote the normalised valuation on $M$. 
    
    We study the structure of the $\ell$-torsion subgroup $E[\ell] \subset E(\overline{K})$ in terms of clusters. The involution $[-1]$ acts on $E[\ell]$ by negating the $y$-coordinate, so the non-zero $\ell$-torsion points occur in pairs $\pm P$ with the same $x$-coordinate. Let
    $$\mathcal{X}_\ell := \{ x(P) \mid P \in E[\ell] \setminus \{\mathcal{O}_E\} \}.$$
    Note that $|\mathcal{X}_\ell| = (\ell^2 - 1)/2$. We now define the $\ell$-torsion cluster picture of $E$ by adapting the standard cluster formalism of \cite{M2D2} to the set $\mathcal{X}_\ell$.

\begin{definition}
An \textit{($\ell$-torsion) cluster} is a non-empty subset $\mathfrak{s} \subseteq \mathcal{X}_\ell$ of the form $\mathfrak{s} = D \cap \mathcal{X}_\ell$ for some disc $D = \{x \in \overline{K} \mid \frac{1}{e}v_M(x - z) \ge d\}$, for some $z \in \overline{K}$ and $d\in \mathbb{Q}$. $\Sigma_{\ell}$ is the collection of all clusters of $\mathcal{X}_\ell$. By the ultrametric property of valuations, any two clusters in $\Sigma_{\ell}$ are either disjoint or contained in one another. $\Sigma_{\ell}$ includes the maximal set $\mathcal{X}_\ell$.
\end{definition}

\begin{definition} If $|\mathfrak{s}| >1$, we define its \textit{absolute depth} as $$\min_{\substack{x_i, x_j \in \mathfrak{s} \\ x_i \neq x_j}} \frac{1}{e} v_M(x_i - x_j).$$
\end{definition}

\begin{definition}\label{defn:relative depth}
  If $\mathfrak{s} \neq \mathcal{X}_\ell$, its \textit{parent cluster} $P(\mathfrak{s})$ is defined as the smallest cluster in $\Sigma$ strictly containing $\mathfrak{s}$. A cluster $\mathfrak{s}'$ is a \textit{child} of $\mathfrak{s}$ if $\mathfrak{s}'$ is a maximal subcluster of $\mathfrak{s}$. The \textit{relative depth} of a proper cluster $\mathfrak{s}$ is the distance to its parent, $\delta_{\mathfrak{s}} := d_{\mathfrak{s}} - d_{P(\mathfrak{s})}.$ We set $\delta_{\mathcal{X}_\ell} := d_{\mathcal{X}_{\ell}}$.
  \end{definition}

\begin{definition}
An \textit{$\ell$-torsion cluster picture} is a triplet $(\Sigma_{\ell}, \mathcal{X}_\ell, \delta)$ where:
\begin{enumerate}
    \item $\mathcal{X}_\ell := \{ x(P) \mid P \in E[\ell] \setminus \{\mathcal{O}_E\} \}$ is the set of distinct $x$-coordinates of the non-trivial $\ell$-torsion points.
    \item $\Sigma_{\ell}$ is the collection of all clusters of $\mathcal{X}_\ell$. 
    \item $\delta: \Sigma_{\ell} \to \mathbb{Q} \cup \{\infty\}$ is a depth function that assigns to each cluster $\mathfrak{s} \in \Sigma_{\ell}$ of size $|\mathfrak{s}| > 1$ its relative depth $\delta_{\mathfrak{s}}$.
\end{enumerate}
\end{definition}

On how to pictorially represent a cluster picture, we refer the reader to \cite{M2D2}. We define the \textit{shape} of an $\ell$-torsion cluster picture as the pair $(\mathcal{X}_{\ell}, \Sigma_{\ell})$. In the context of $\ell$-torsion on elliptic curves, the shape of these cluster pictures is limited. We will show in the proof of Theorem \ref{thm:potential_red} that the set $\mathcal{X}_\ell$ will always partition into one of two specific shapes:
    
    \begin{enumerate}
        \item \textit{Good cluster picture:} 
        All distinct $x$-coordinates in $\mathcal{X}_\ell$ are equidistant from one another. In this case, $\mathcal{X}_\ell$ itself is the only cluster of size $>1$, and $|\Sigma_\ell|=1$. 

        \begin{ex}
         $\ell=3$: \scalebox{1.5}{\clusterpicture
          \Root[D] {} {first} {r1};
          \Root[D] {} {r1} {r2};
          \Root[D] {}{r2}{r3};
          \Root[D] {}{r3}{r4};
          \ClusterLD c1[\mathcal{X}_3][] = (r1)(r2)(r3)(r4);
          \endclusterpicture}  \quad  $\ell=5$: \scalebox{1.5}{
            \clusterpicture
            \Root[D] {} {first} {r1};
            \Root[D] {} {r1} {r2};
            \Root[D] {}{r2}{r3};
            \Root[D] {}{r3}{r4};
            \Root[D] {}{r4}{r5};
            \Root[D] {}{r5}{r6};
            \Root[D] {}{r6}{r7};
            \Root[D] {}{r7}{r8};
            \Root[D] {}{r8}{r9};
            \Root[D] {}{r9}{r10};
            \Root[D] {}{r10}{r11};
            \Root[D] {}{r11}{r12};
            \ClusterLD c1[\mathcal{X}_5][] = (r1)(r2)(r3)(r4)(r5)(r6)(r7)(r8)(r9)(r10)(r11)(r12);
            \endclusterpicture}
        \end{ex}
        
        \item \textit{Multiplicative cluster picture:} 
        The $x$-coordinates partition into a strictly nested sequence of $k = \lfloor \ell/2 \rfloor$ proper subclusters. There exists a single chain of strictly contained clusters:
        $$
        \mathfrak{s}_k \subsetneq \mathfrak{s}_{k-1} \subsetneq \dots \subsetneq \mathfrak{s}_1 \subsetneq \mathcal{X}_\ell
        $$
        where the innermost cluster has size $|\mathfrak{s}_k| = \ell$, and each subsequent layer contains $\ell$ additional points, such that $|\mathfrak{s}_i \setminus \mathfrak{s}_{i+1}| = \ell$ for $1 \le i < k$. The complement in the maximal cluster contains $|\mathcal{X}_\ell \setminus \mathfrak{s}_1| = \lfloor \ell/2 \rfloor$ points. The cardinality of $\Sigma_{\ell}$ is $k+1$.

        \begin{ex}
          $\ell=3$:\scalebox{1.5}{
            \clusterpicture
            \Root[D] {} {first} {r1};
            \Root[D] {1} {r1} {r2};
            \Root[D] {}{r2}{r3};
            \Root[D] {}{r3}{r4};
            \ClusterLD c1[\mathfrak{s}_1][] = (r2)(r3)(r4);
            \ClusterLD c2[\mathcal{X}_3][] = (r1)(c1);
            \endclusterpicture
          } \quad $\ell=5:$ \scalebox{1.5}{
            \clusterpicture
            \Root[D] {} {first} {r1};
            \Root[D] {} {r1} {r2};
            \Root[D] {1}{r2}{r3};
            \Root[D] {}{r3}{r4};
            \Root[D] {}{r4}{r5};
            \Root[D] {}{r5}{r6};
            \Root[D] {}{r6}{r7};
            \Root[D] {1}{r7}{r8};
            \Root[D] {}{r8}{r9};
            \Root[D] {}{r9}{r10};
            \Root[D] {}{r10}{r11};
            \Root[D] {}{r11}{r12};
            \ClusterLD c1[\mathfrak{s}_2][] = (r8)(r9)(r10)(r11)(r12);
            \ClusterLD c2[\mathfrak{s}_1][] = (c1)(r3)(r4)(r5)(r6)(r7);
            \ClusterLD c3[\mathcal{X}_5][] = (r1)(r2)(c1)(c2);
            \endclusterpicture
            }
        \end{ex}
        
    \end{enumerate}

\section{Potential reduction type via $\ell$-torsion cluster pictures}\label{sec:base cases}

In this section, we will prove the following  theorem on potential reduction type.
 
\begin{thm} \label{thm:potential_red}
Let $K$ be a non-archimedean local field of any residue characteristic $p$, and let $E/K$ be an elliptic curve given by a Weierstrass model $y^2+a_1xy+a_3y=x^3+a_2x^2+a_4x+a_6$. Let $\ell \ne 2,p$ be a prime. The shape of the $\ell$-torsion cluster picture determines whether $E$ has potentially good or potentially multiplicative reduction, independent of the choice of model. In particular,
\begin{enumerate}[label=(\roman*)]
    \item $E$ has potentially good reduction if and only if it has a good $\ell$-torsion cluster picture shape.
    \item $E$ has potentially multiplicative reduction if and only if it has a multiplicative $\ell$-torsion cluster picture shape.
\end{enumerate}
\end{thm}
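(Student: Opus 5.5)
Since the shape of the cluster picture depends only on the pairwise distances $v(x_i-x_j)$, the plan is in three steps. First I compute the shape for curves with good or split multiplicative reduction. Then I show the shape is invariant under changes of Weierstrass model and under finite extensions of $K$. Finally I pass to an extension where $E$ becomes semistable. Because potentially good and potentially multiplicative reduction are mutually exclusive and exhaustive, and the two shapes are distinct (one has $|\Sigma_\ell|=1$, the other $|\Sigma_\ell|=k+1\ge 2$), parts (i) and (ii) follow together once each reduction type is shown to produce its shape.

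\emph{Invariance.} A change of model $x\mapsto u^2x+r$ with $u\in K^\times$, $r\in K$ sends $x_i-x_j$ to $u^2(x_i-x_j)$. All distances therefore shift by the same constant $2v(u)$, so equidistance is preserved, and more generally the cluster structure $\Sigma_\ell$ is unchanged. Passing to a finite extension $F/K$ leaves the normalised distances $\tfrac1e v_M$ unchanged, since we always work in $\overline K$. Hence we may replace $K$ by a finite extension $F$ over which $E$ has semistable reduction (and, in the multiplicative case, split multiplicative reduction), and then choose a minimal model over $F$.

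\emph{Good reduction.} Take a minimal model with good reduction. Since $\ell\ne p$, reduction $E[\ell]\to\tilde E[\ell]$ is injective. All $x(P)$ are integral: a torsion point in the formal group $\hat E(\mathfrak m)$ has order a power of $p$, so nonzero $\ell$-torsion points do not reduce to $\mathcal O$. Distinct $x$-coordinates $x(P)\ne x(Q)$ with $Q\neq\pm P$ have distinct reductions, because $\tilde P\ne\pm\tilde Q$ by injectivity. So $v(x_i-x_j)=0$ for all $i\ne j$, which is the good shape.

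\emph{Split multiplicative reduction.} Use the Tate curve $E_q:\ y^2+xy=x^3+a_4(q)x+a_6(q)$ with $v(q)=n>0$, and $x(u)=\sum_{m\in\Z}\frac{q^mu}{(1-q^mu)^2}-2\sum_{m\ge1}\frac{mq^m}{1-q^m}$. The $\ell$-torsion is $u=\zeta^a q^{b/\ell}$ for $0\le a,b<\ell$, not both zero. Modulo $u\mapsto u^{-1}$ and $u\mapsto qu$, we may take $b\in\{0,\dots,k\}$ with $k=\lfloor \ell/2\rfloor$, and also restrict $a$ to half its range when $b=0$. This gives exactly $k$ points with $b=0$ and $\ell$ points for each $b=1,\dots,k$, matching the claimed sizes.

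For $b=0$, $x(\zeta^a)\equiv \zeta^a/(1-\zeta^a)^2$ modulo $\mathfrak m$. These values are distinct units mod $\mathfrak m$, since $\zeta\mapsto\zeta/(1-\zeta)^2$ is injective modulo inversion and $\ell\ne p$. For $1\le b\le k$, the leading term is $x\approx u+q/u+\dots$, of valuation $bn/\ell$. Differences within the same $b$ level are $\approx q^{b/\ell}(\zeta^a-\zeta^{a'})$, with valuation $bn/\ell$. Differences across levels $b<b'$ have valuation $\min(bn/\ell,b'n/\ell)=bn/\ell$, and differences between a $b=0$ point and a $b\ge1$ point have valuation $0$.

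Thus the clusters are $\mathfrak s_i=\{b\ge i\}$, of depth $in/\ell$, with $|\mathfrak s_k|=\ell$ and $|\mathfrak s_i\setminus\mathfrak s_{i+1}|=\ell$. The complement $\mathcal X_\ell\setminus\mathfrak s_1$ consists of the $k$ points with $b=0$, and all relative depths are $n/\ell$. This is the multiplicative shape.

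\emph{Main obstacle.} The hard part is the careful Tate-curve estimate. For $b\ge1$ and $b\le k<\ell/2$, one must check that the terms $q^mu$ and $q^m/u$ with $m\neq 0$, together with the constant term, do not disturb the leading $u$ and $q/u$ contributions. For $b=k$, the $q/u$ term has valuation $(\ell-k)n/\ell>kn/\ell$, so it is dominated, but this requires $\ell$ odd, which holds since $\ell\ne2$. One must also verify that the differences $\zeta^a-\zeta^{a'}$ are units, which uses $\ell\ne p$. I would handle this by factoring out $u$ and expanding in the small quantities $q^{1-b/\ell}$ and $q^{b/\ell}$.
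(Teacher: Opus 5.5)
Your proof is correct, and its structure matches the paper's. You compute the shape on a minimal model in the good and split multiplicative cases, observe that $x=u^2x'+r$ shifts every distance by $2v(u)$, and pass to an extension where $E$ is semistable.

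The one real difference is the Tate-curve computation. The paper uses the theta-function identity (\ref{eq:x-difference-theta}), which writes $X(u_i,q)-X(u_j,q)$ as $-u_j$ times a ratio of four $\theta$-values. The $q$-exponents of $u_i,u_j,u_iu_j,u_iu_j^{-1}$ all lie in $[0,1)$ and none of these equals $1$, so every $\theta$-factor is a unit. This gives $v(x_i-x_j)=\min(s_i,s_j)\,n/\ell$ in one line, with no case split. In particular, your separate check on the level-$0$ points (injectivity of $\zeta\mapsto\zeta/(1-\zeta)^2$ modulo inversion) is absorbed there into the fact that $1-\zeta^{a+a'}$ and $1-\zeta^{a-a'}$ are units.

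Your direct expansion of the series for $X(u,q)$ costs a three-way case analysis (level $0$, same level, different levels) and the dominance estimates you flag. Those estimates do go through, using $b\le k<\ell/2$ and $\ell\ne p$. In exchange, you need only the defining series, whereas the paper has to justify the product identity beyond $p$-adic fields via Roquette.

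You also make explicit, via the formal group having no prime-to-$p$ torsion, that the $x(P)$ are integral in the good-reduction case. The paper uses this tacitly when it reduces $x$-coordinates.
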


    By the semistable reduction theorem, an elliptic curve attains either good or multiplicative reduction after a finite extension. We first compute the $\ell$-torsion distances in these two semistable cases, assuming a minimal Weierstrass model. We then complete the proof of Theorem \ref{thm:potential_red} by understanding how the valuations change under a transformation of Weierstrass equations, showing that the resulting shape of the $\ell$-torsion cluster picture determines the potential reduction type.
    
    \begin{prop}\label{prop:base_cases}
      Let $E$ be an elliptic curve over $K$ given by a minimal Weierstrass equation, and let $\ell \ne p,2$ be a prime.
      \begin{enumerate}[label=(\roman*)]
      \item \textit{Good reduction:} If $E$ has good reduction, then $\mathcal{X}_\ell$ has a single cluster and $d_{\mathcal{X}_\ell} = 0$.
      
      \item \textit{Multiplicative reduction:} If $E$ has multiplicative reduction (split or non-split) of type $I_n$ with $n \ge 1$, then the $\ell$-torsion cluster picture is multiplicative, with $d_{\mathcal{X}_\ell} = 0$, and $\delta_{\mathfrak{s}_i} = n/\ell$ for each subcluster $\mathfrak{s}_i \in \Sigma_{\ell}$ with $\mathfrak{s}_i \ne \mathcal{X}_{\ell}$.
      \end{enumerate}
      \end{prop}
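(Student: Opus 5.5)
For part (i), I will use that a minimal model with good reduction reduces to an elliptic curve $\tilde E$ over the residue field $k$, and that reduction is injective on prime-to-$p$ torsion. Since $\ell \ne p$, the map $E[\ell] \to \tilde E[\ell]$ is injective (by the standard result that the kernel of reduction $E_1$ has no nontrivial torsion prime to $p$). So reduction gives a bijection on $\ell$-torsion. Every $x(P)$ with $P\in E[\ell]\setminus\{\mathcal O\}$ is integral, because $P$ does not reduce to $\mathcal O$. Two such points with distinct $x$-coordinates satisfy $P\ne\pm Q$, so $\tilde P\ne \pm\tilde Q$ and hence $x(\tilde P)\ne x(\tilde Q)$. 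It follows that $v_M(x_i-x_j)=0$ for all $i\ne j$. Therefore $\mathcal X_\ell$ is equidistant with $d_{\mathcal X_\ell}=0$, and it is the only cluster of size $>1$.

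For part (ii), I will work over a finite unramified (or at most quadratic unramified) extension, where the reduction is split multiplicative. This does not change $v_K$-normalised distances, and $x$-coordinates are preserved after an admissible change of minimal model. Then I will use the Tate parametrisation $E\cong \mathbb G_m/q^{\mathbb Z}$ with $v_K(q)=n$. The Tate curve $E_q$ has explicit series $x(u)=\sum_{m\in\Z} \frac{q^m u}{(1-q^m u)^2}-2s_1(q)$. A minimal model of $E$ differs from $E_q$ by $x\mapsto u^2x+r$ with $u$ a unit and $r$ integral, which preserves differences up to units. The $\ell$-torsion is $\{\zeta^a q^{b/\ell}\}$ with $0\le a,b<\ell$, $(a,b)\ne(0,0)$, taken up to inversion and multiplication by $q$. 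I choose representatives with $b\in\{0,\dots,k\}$, where $k=\lfloor \ell/2\rfloor$, and for $b=0$ take $a$ up to sign.

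Next I compute the valuations of the $x$-coordinates. For $0<v(u)<n$, i.e.\ $b\ge1$, the dominant terms give $x(u)=u+(q/u)+\dots$ with leading term $u$ when $b\le k$. More precisely $x(u)\equiv u \pmod{\text{higher}}$, with $v(x(u)-u)\ge\min(2v(u),\,n-v(u))>v(u)$. For $b=0$ we get $x(\zeta^a)=\zeta^a/(1-\zeta^a)^2+O(q)$, which is a unit with pairwise distinct reductions. The key estimate is the difference
$$x(\zeta^a q^{b/\ell})-x(\zeta^{a'}q^{b/\ell})=(\zeta^a-\zeta^{a'})q^{b/\ell}+O\bigl(q^{\min(2b,\,\ell-b)/\ell}\bigr)$$
for points in the same layer $b$, which has valuation exactly $bn/\ell$ since $\ell\ne p$ makes $\zeta^a-\zeta^{a'}$ a unit. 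Points in different layers $b<b'$ differ by valuation $bn/\ell$ (for $b\ge1$), and by $0$ when $b=0$. This produces the clusters $\mathfrak s_i=\{\text{layers } \ge i\}$. Here $|\mathfrak s_i\setminus\mathfrak s_{i+1}|=\ell$, $|\mathfrak s_k|=\ell$, the $b=0$ layer has $(\ell-1)/2=k$ points, the depths are $d_{\mathfrak s_i}=in/\ell$, and so $\delta_{\mathfrak s_i}=n/\ell$ and $d_{\mathcal X_\ell}=0$.

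The main obstacle is controlling the error terms in the $q$-expansion of $x(u)$ uniformly. I must verify that the terms with $m\ne 0$ (notably $q/u$, of valuation $(\ell-b)n/\ell$) never cancel the leading difference. This needs $\ell-b>b$, which is exactly why representatives with $b\le k$ are chosen. I will also check that the innermost layer $b=k$ still has its internal differences dominated by $(\zeta^a-\zeta^{a'})q^{k/\ell}$, using $n(\ell-k)/\ell>nk/\ell$. The comparison between the Tate model and an arbitrary minimal model, including the non-split twist, should be a routine unit-scaling argument.
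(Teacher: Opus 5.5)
Your proposal is correct. Part (i) is the same argument as the paper's. For part (ii) you take a genuinely different route.

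The paper never expands $X(u,q)$. It uses the exact factorisation $X(u_i,q)-X(u_j,q)=-u_j\,\theta(u_iu_j,q)\,\theta(u_iu_j^{-1},q)/\bigl(\theta(u_i,q)^2\theta(u_j,q)^2\bigr)$ and notes that $\theta(w,q)$ is a unit for every $w=\zeta_\ell^r q^{s/\ell}\neq 1$ with $0\le s<\ell$. Since $s_i+s_j\le 2\lfloor \ell/2\rfloor<\ell$, all four $\theta$-factors are units. Then $v_K(x_i-x_j)=v_K(u_j)=\min(s_i,s_j)\,n/\ell$ follows in one line, with no case distinction between layers.

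You instead read off the leading terms of the $q$-series, namely $u$ in layers $b\ge 1$ and $\zeta^a/(1-\zeta^a)^2$ in layer $0$. You then bound everything else by the ultrametric inequality, the critical term being $q/u$ from $m=-1$. This shows exactly why one restricts to $b\le\lfloor\ell/2\rfloor$: it is the inequality $\ell-b>b$, which the paper encodes as $s_i+s_j<\ell$. It also avoids the $\theta$-identity. The price is a three-way case split (same layer, different layers, layer $0$) and one verification you only assert.

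That verification is that the values $\zeta^a/(1-\zeta^a)^2=-1/\bigl((1-\zeta^a)(1-\zeta^{-a})\bigr)$ are units with distinct reductions for $a\not\equiv\pm a'\pmod{\ell}$. It holds because the difference of their reciprocals is $\zeta^a+\zeta^{-a}-\zeta^{a'}-\zeta^{-a'}=(\zeta^a-\zeta^{a'})(1-\zeta^{-a-a'})$, a unit since $\ell\ne p$. You should write this out.

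Your bookkeeping is also slightly more careful than the paper's in two places. You justify identifying the given minimal model with $E_q$, since they differ by $x\mapsto u^2x+r$ with $u$ a unit. You also take $a$ up to sign in layer $0$, whereas the paper's list $\zeta_\ell^r q^{s/\ell}$ represents each $x$-coordinate with $s=0$ twice. This is harmless in the paper, since only pairs with distinct $x$-coordinates are compared.
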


    \begin{proof}
   
        \textit{Case I (Good Reduction):}        
        Assume $E$ has good reduction at $p$ and is given by a minimal Weierstrass equation. Because $\ell \neq p$, the reduction map $\pi: E[\ell] \hookrightarrow \widetilde{E}(k)$ is injective.
        
        Let $x_i \neq x_j$ be distinct coordinates in $\mathcal{X}_\ell$, lifting to points $P, Q \in E[\ell]$ such that $P \neq \pm Q$. Suppose for contradiction that $v_K(x_i - x_j) > 0$, meaning $\widetilde{x}_i = \widetilde{x}_j$ in the residue field $k$. Because the reduced points lie on $\widetilde{E}$, sharing an $x$-coordinate forces $\widetilde{P} = \pm \widetilde{Q}$. By the injectivity of $\pi$, this implies $P = \pm Q$, which contradicts $x_i \neq x_j$. Therefore, $v_K(x_i - x_j) = 0$ for all distinct pairs, and $\mathcal{X}_\ell$ forms a single cluster of depth $0$.

        \vspace{0.1in}
        \textit{Case II (Multiplicative reduction):} If $E/K$ has non-split multiplicative reduction, it acquires split multiplicative reduction over an unramified quadratic extension $K'/K$. Since $e_{K'/K}=1$, the valuations of the $\ell$-torsion points remain unchanged. So, we may assume that $E$ has split multiplicative reduction over $K$ and is of type $I_n$. By the theory of Tate curves (\cite[Theorem 3.3, V]{SilII}, \cite{Roquette}), there is an analytic isomorphism $E \cong E_q$ for some $q \in K^\times$ with $v_K(q) = n > 0$. The isomorphism is given by the surjective homomorphism
        $$
        \phi: \overline{K}^{\times} / q^{\mathbb{Z}} \longrightarrow E_q(\overline{K}), \qquad
        u \longmapsto 
        \begin{cases}
            (X(u,q), Y(u,q)) & \text{if } u \notin q^{\mathbb{Z}}, \\
            \mathcal{O}_E & \text{if } u \in q^{\mathbb{Z}}.
        \end{cases}
        $$
        
        To compute the valuation $v_K(x_i - x_j)$ for distinct $x_i, x_j \in \mathcal{X}_\ell$, we evaluate the difference $X(u_i, q) - X(u_j, q)$ for their corresponding parameters $u_i, u_j \in E_q[\ell]$. We first recall the identity expressing this difference in terms of the $\theta$-function (\cite[Proposition 3.2, V]{SilII}):
        \begin{equation} \label{eq:x-difference-theta}
            X(u_i, q) - X(u_j, q) = -\frac{u_j \, \theta(u_i u_j, q) \theta(u_i u_j^{-1}, q)}{\theta(u_i, q)^2 \theta(u_j, q)^2}, \tag{\dag}
        \end{equation}
        where $$\theta(u, q) := (1 - u) \prod_{k \ge 1} \frac{(1 - q^k u)(1 - q^k u^{-1})}{(1 - q^k)^2}.$$
        We remark that Silverman proves (\ref{eq:x-difference-theta}) only over $p$-adic fields, but the same holds over non-archimedean local fields by comparing divisors and leading Laurent coefficients of the two $q$-periodic functions \cite[\S2, \S3]{Roquette}. 
        
        Because points in $E[\ell]$ share an $x$-coordinate if and only if they are inverses, the parameters $u \in E_q[\ell]$ share an $X$-coordinate if and only if $u_i = u_j^{-1}$. Therefore, we can uniquely identify every distinct $x$-coordinate in $\mathcal{X}_\ell$ by restricting our representatives to:
        $$
        u = \zeta_\ell^r q^{s/\ell} \quad \text{for } \quad r \in \{0, \dots, \ell-1\} \quad \text{and} \quad s \in \{0, 1, \dots, \lfloor \ell/2 \rfloor\}.
        $$
        (Excluding $u=1$, which corresponds to $\mathcal{O}_E$).
        
        Let $x_i, x_j \in \mathcal{X}_\ell$ be distinct coordinates corresponding to $u_i = \zeta_{\ell}^{r_i} q^{s_i/\ell}$ and $u_j = \zeta_{\ell}^{r_j} q^{s_j/\ell}$. Without loss of generality, assume $s_j \le s_i$, so $v_K(u_j) \le v_K(u_i)$. 
        
        To evaluate equation (\ref{eq:x-difference-theta}), we first understand the valuation of $\theta(w, q)$ for any generic value $w$ of the form $\zeta_\ell^r q^{s/\ell}$, where $0 \le s < \ell$, $r \in \mathbb{Z}$, and $w \neq 1$. 
      Looking at the factors in $\theta(w, q)$, we have
      \begin{itemize}
        \item $v_K(1 - w) = 0$. (If $s > 0$, then $v_K(w) > 0$. If $s = 0$, then $w = \zeta_\ell^r \neq 1$, and $v_K(1 - \zeta_\ell^r) = 0$ since $p \neq \ell$).
        \item $v_K(1 - q^k w) = 0$ because $v_K(q^k w) = kn + (sn/\ell) > 0$ for all $k \ge 1$.
        \item $v_K(1 - q^k w^{-1}) = 0$ because $v_K(q^k w^{-1}) = n(k -(s/\ell)) > 0$ since $k \ge 1$ and $s/\ell < 1$.
        \item $v_K(1 - q^k) = 0$ similarly.
      \end{itemize}
      Thus, for any such $w$, the infinite product defining $\theta(w,q)$ evaluates to a unit, and so $v_K(\theta(w, q)) = 0$.
        
        We now check the four arguments passed to the $\theta$-functions in (\ref{eq:x-difference-theta}), namely $u_i$, $u_j$, $u_i u_j$, and $u_i u_j^{-1}$. Because $u_i$ and $u_j$ represent distinct $x$-coordinates, we know $u_i \neq 1$, $u_j \neq 1$, and $u_i \neq u_j^{\pm 1}$. 
        The exponent of $q$ for the product $u_i u_j$ is $(s_i + s_j)/\ell \le 2\lfloor \ell/2 \rfloor/\ell \le (\ell-1)/\ell < 1$. 
        The exponent of $q$ for the quotient $u_i u_j^{-1}$ is $(s_i - s_j)/\ell \le \lfloor \ell/2 \rfloor/\ell < 1$.
        Because both exponents are strictly less than $1$ and neither expression equals $1$, all the four $\theta$-function terms in (\ref{eq:x-difference-theta}) have valuation $0$.
        
        Applying this to (\ref{eq:x-difference-theta}) yields 
        $$
        v_K(x_i - x_j) = v_K(u_j) = \frac{s_j}{\ell} n = \min(s_i, s_j) \frac{n}{\ell}.
        $$
        As the distance between any two $x$-coordinates is given by the minimum of their respective $s$ parameters, the elements of $\mathcal{X}_\ell$ partition into a strictly nested sequence of clusters. The outermost cluster (where $s=0$) has depth $0$, and each subsequent subcluster (incrementing $s$) adds $n/\ell$ to the depth, forming the $\lfloor \ell/2 \rfloor$ nested subclusters as claimed.

    \end{proof}

    \begin{lem} \label{lem:transformation}
    If Weierstrass models $E$ and $E'$ over $K$ are related by $x = u^2 x' + r$ for some $u \in K^{\times}$ and $r \in K$, then the corresponding distinct coordinates $x_i, x_j \in \mathcal{X}_\ell$ and $x_i', x_j' \in \mathcal{X}_\ell'$ satisfy $v_K(x_i - x_j) = 2v_K(u) + v_K(x'_i - x'_j)$. Consequently, the absolute depths shift by $2v_K(u)$, and the relative depths $\delta_{\mathfrak{s}}$ are invariant.
    \end{lem}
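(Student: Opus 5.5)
The plan is to transport the $\ell$-torsion along the isomorphism between the two models and then track how each pairwise distance changes. A change of Weierstrass model is an isomorphism $\psi: E' \to E$ over $K$ given by
\[
x = u^2x' + r, \qquad y = u^3y' + su^2x' + t .
\]
It is a group isomorphism, so it restricts to a bijection $E'[\ell]\setminus\{\mathcal O\} \to E[\ell]\setminus\{\mathcal O\}$ commuting with $[-1]$. It therefore induces a bijection $\mathcal X'_\ell \to \mathcal X_\ell$ given explicitly by $x' \mapsto u^2x' + r$. This is what "corresponding coordinates" means: $x_i = u^2x_i' + r$ and $x_j = u^2x_j' + r$. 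Distinctness is preserved because the map is affine with $u \neq 0$. In particular the $y$-part of the substitution (the parameters $s,t$) plays no role. Note also that $K(E[\ell]) = K(E'[\ell])$, so both sides are measured with the same $M$ and $e$.

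Subtracting gives $x_i - x_j = u^2(x_i' - x_j')$. Applying $\frac1e v_M$, which restricts to $v_K$ on $K$, we get
\[
\tfrac1e v_M(x_i-x_j) = 2v_K(u) + \tfrac1e v_M(x_i'-x_j'),
\]
which is the claimed identity with $v_K$ extended to $\overline K$. So every pairwise distance is shifted by the same constant $c = 2v_K(u)$.

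The structural consequences follow next. Take a disc $D' = \{x : v(x - z') \ge d\}$. Its image under $x \mapsto u^2x + r$ is the disc $\{x : v(x - (u^2z' + r)) \ge d + c\}$, and the inverse map sends discs to discs in the same way. Hence the bijection $\mathcal X'_\ell \to \mathcal X_\ell$ carries clusters to clusters, so $\Sigma'_\ell \cong \Sigma_\ell$ as a poset and the shape is preserved. It also preserves parent/child relations. Absolute depths, being minima of pairwise distances, satisfy $d_{\mathfrak s} = d_{\mathfrak s'} + c$. Relative depths $\delta_{\mathfrak s} = d_{\mathfrak s} - d_{P(\mathfrak s)}$ are differences of two depths, so the constant cancels and they are invariant. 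The exception is the top cluster, where $\delta_{\mathcal X_\ell} = d_{\mathcal X_\ell}$ by convention and so also shifts by $c$. The statement's "relative depths" should be read as referring to proper subclusters.

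There is no real obstacle here. The only point needing care is to justify that the torsion correspondence really is the affine substitution on $x$-coordinates, i.e. that an isomorphism of Weierstrass models preserves $\ell$-torsion and the pairing $\pm P$. This is immediate from it being an isomorphism of elliptic curves fixing $\mathcal O$.
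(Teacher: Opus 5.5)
Your proof is correct and follows the same route as the paper: the model change induces the affine bijection $x' \mapsto u^2x'+r$ on the $\ell$-torsion $x$-coordinates, so every pairwise distance shifts by $2v_K(u)$, and this shift cancels in $d_{\mathfrak s}-d_{P(\mathfrak s)}$. Your additional checks are correct refinements that the paper leaves implicit: that discs map to discs, so the cluster poset is preserved, and that the convention $\delta_{\mathcal{X}_\ell}=d_{\mathcal{X}_\ell}$ means the top cluster's value shifts rather than staying invariant.
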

    
    \begin{proof}
    Suppose the Weierstrass models $E$ and $E'$ are related by an admissible change of variables $x' = u^2 x + r$ and $y' = u^3 y + u^2 s x + t$ over $K$. This induces a canonical bijection between their $\ell$-torsion $x$-coordinates. For any distinct $x_i, x_j \in \mathcal{X}_\ell$ mapping to $x'_i, x'_j \in \mathcal{X}'_\ell$, we have:
    $$
    v_K(x'_i - x'_j) = v_K(u^2(x_i - x_j)) = 2v_K(u) + v_K(x_i - x_j).
    $$
    Thus, every absolute depth shifts uniformly by $2v_K(u)$ and the constant cancels in the relative depths $\delta_{\mathfrak{s}} = d_{\mathfrak{s}} - d_{P(\mathfrak{s})}$.
    \end{proof}
    
    We can now prove Theorem \ref{thm:potential_red} regarding potential reduction.
    
    \begin{proof}[Proof of Theorem \ref{thm:potential_red}]
    Assume that $E$ is given by a minimal Weierstrass model over $K$. By the semistable reduction theorem, $E$ attains either good or multiplicative reduction over a finite extension $L/K$ with ramification index $e$. 
    
    Over this extension $L$, there exists a minimal Weierstrass model $E'$ that possesses this semistable reduction. By Proposition \ref{prop:base_cases}, the shape of the $\ell$-torsion cluster picture of $E'$ is either good or multiplicative. Our original model $E$ is related to $E'$ by an admissible change of variables over $L$, meaning there exists some $u \in L^{\times}$ such that $x = u^2 x' + r$. Applying Lemma \ref{lem:transformation}, the distances between any distinct $x_i, x_j \in \mathcal{X}_\ell$ on the original model $E$ over $L$ are given by:
    $$
    v_L(x_i - x_j) = 2v_L(u) + v_L(x'_i - x'_j).
    $$
    Normalising back to the base field $K$ using the identity $v_L(\cdot) = e \cdot v_K(\cdot)$, every absolute depth shifts uniformly by the constant $2v_L(u)/e$. Because adding a uniform constant to all pairwise distances preserves the shape of the cluster picture, the cluster picture of $E$ is identical to that of $E'$.
    \end{proof}

    \addtocontents{toc}{\protect\setcounter{tocdepth}{1}}

    \section{Absolute Depths on Minimal Models} \label{sec:absolute_depths}

    Before we sort our cluster pictures by reduction type to prove Theorem \ref{thm:kodara_intro}, we determine the absolute depths of the clusters on minimal models. 

    \begin{lem}\label{lem:potentially_good}
      Let $E/K$ be an elliptic curve given by a minimal Weierstrass model with discriminant $\Delta_{\min}$. If $E$ has potentially good reduction, the absolute depth of its single cluster is $d_{\mathcal{X}_{\ell}} = v_K(\Delta_{\min})/6$.
    \end{lem}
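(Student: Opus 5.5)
Take a finite extension $L/K$, with ramification index $e_{L/K}$, over which $E$ acquires good reduction, and let $E'$ be a minimal Weierstrass model of $E$ over $L$. The argument has two steps. First, Proposition \ref{prop:base_cases}(i) applied over $L$ says that the points of $\mathcal{X}'_\ell$ on $E'$ are pairwise at $L$-distance $0$, that is, $v_L(x'_i-x'_j)=0$ for all distinct pairs. Second, relate $E$ to $E'$ by an admissible change of variables over $L$ with $x=u^2x'+r$ for some $u\in L^\times$. Lemma \ref{lem:transformation}, applied over $L$, gives
\[
v_L(x_i-x_j)=2v_L(u)+v_L(x'_i-x'_j)=2v_L(u).
\]
Dividing by $e_{L/K}$, the absolute depth of the single cluster is $d_{\mathcal{X}_\ell}=2v_L(u)/e_{L/K}$.

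It then remains to compute $v_L(u)$. The discriminants of the two models satisfy $\Delta_{\min}=u^{12}\Delta'$. Since $E'$ has good reduction over $L$ and is minimal there, $v_L(\Delta')=0$. Hence $12v_L(u)=v_L(\Delta_{\min})=e_{L/K}\,v_K(\Delta_{\min})$. Substituting into the formula for the depth gives
\[
d_{\mathcal{X}_\ell}=\frac{2}{e_{L/K}}\cdot\frac{e_{L/K}\,v_K(\Delta_{\min})}{12}=\frac{v_K(\Delta_{\min})}{6}.
\]

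Two normalisation points must be checked, and this is where care is needed.

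\textbf{Convention for $u$.} The paper writes the change of variables both as $x=u^2x'+r$ and as $x'=u^2x+r$. I need the direction in which the $K$-model $E$ is expressed in terms of the $L$-model $E'$. With $x=u^2x'+r$, the standard transformation rule gives $\Delta=u^{12}\Delta'$, which is the relation used above; so I only have to confirm that the sign of $v_L(u)$ matches this direction.

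\textbf{Normalisation of $d_{\mathcal{X}_\ell}$.} The depth is defined using $\frac1e v_M$ with $M=K(E[\ell])$. I must check that this agrees with $v_L/e_{L/K}$, which is the normalisation used above. It does: both are the unique extension of $v_K$ to $\overline{K}$, so the depth may be computed in any field containing $\mathcal{X}_\ell$, for instance the compositum $LM$.

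Finally, Proposition \ref{prop:base_cases}(i) is stated over a local field. To apply it over $L$, I will note that $L$ is again a non-archimedean local field with residue characteristic $p\neq\ell$, so the proposition applies verbatim.
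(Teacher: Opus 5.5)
Your proposal is correct and follows the paper's proof essentially line for line. You pass to an extension $L$ of good reduction and use Proposition~\ref{prop:base_cases}(i) with Lemma~\ref{lem:transformation} over $L$ to get $v_L(x_i-x_j)=2v_L(u)$; then $\Delta_{\min}=u^{12}\Delta'$ with $v_L(\Delta')=0$ gives $12v_L(u)=e_{L/K}v_K(\Delta_{\min})$. Your extra checks on the direction of the change of variables (with $x=u^2x'+r$ one indeed has $\Delta=u^{12}\Delta'$) and on the normalised valuation not depending on the choice of field are sound, and they make explicit points the paper passes over silently.
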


    \begin{proof}
      By the semistable reduction theorem, $E$ acquires good reduction over a finite extension $L/K$ with ramification index $e$. Let $E'$ be the minimal Weierstrass model over $L$ that possesses this good reduction, and let $u \in L^\times$ be such that $x=u^2x'+r$ over $L$.
      
      Because $E'$ has good reduction, its minimal discriminant $\Delta'$ is a unit, meaning $v_L(\Delta') = 0$. The two discriminants are related to each other by $\Delta_{\min} = u^{12}\Delta'$. Taking valuations over $L$, we obtain:$$v_L(\Delta_{\min}) = 12v_L(u) + v_L(\Delta') = 12v_L(u) \implies v_L(u) = \frac{v_L(\Delta_{\min})}{12}.$$
      
      By Proposition \ref{prop:base_cases}, the cluster picture of $E'$ has a single cluster of depth 0. Applying Lemma \ref{lem:transformation}, the distance between any two distinct $x$-coordinates on $E[\ell]$ is $v_L(x_i - x_j) = 2v_L(u) + 0$. Normalising this distance back to the base field $K$, we obtain 
      
      $$d_{\mathcal{X}_{\ell}} = \frac{1}{e} v_L(x_i - x_j) = \frac{2}{e} \left( \frac{e \cdot v_K(\Delta_{\min})}{12} \right) = \frac{v_K(\Delta_{\min})}{6}.$$
    \end{proof}

    \begin{lem}\label{lem:potentially_multiplicative}
      Let $E/K$ be an elliptic curve given by a minimal Weierstrass model. Assume $E$ has additive multiplicative reduction and the residue characteristic is $p \neq 2$. Then $d_{\mathcal{X}_{\ell}}=1$ and $\delta_{s_i}=n/\ell$ (where $n=-v_K(j(E))$) for every subcluster $\mathfrak{s}_i \subset \mathcal{X}_{\ell}$, and $i\in \{1, \ldots, \lfloor \ell/2 \rfloor \}$.
    \end{lem}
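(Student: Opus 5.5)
The plan is to reduce to the multiplicative computation of Proposition~\ref{prop:base_cases} via a ramified quadratic extension, and then transport depths back with Lemma~\ref{lem:transformation}. Since $p\neq 2$ and $E$ has additive, potentially multiplicative reduction, it is of type $I_n^*$. It is a quadratic twist $E = E_0^{(\chi)}$ of a curve $E_0$ with multiplicative reduction $I_n$, by a ramified quadratic character. Concretely, $E$ acquires multiplicative reduction over $L=K(\sqrt{\pi u})$ for a uniformiser $\pi$ and a suitable unit $u$, with $e_{L/K}=2$. Since $j(E)=j(E_0)$, the integer $n=-v_K(j(E))=v_K(\Delta_{\min}(E_0))$ is the same for both curves.

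Over $L$, the Tate parameter of $E_0$ has $v_L(q)=2n$, so $E$ has type $I_{2n}$ over $L$. Let $E'$ be a minimal model over $L$. Proposition~\ref{prop:base_cases}(ii), applied over $L$, shows that $E'$ has a multiplicative cluster picture, with $v_L$-depth $0$ for the outer cluster and relative $v_L$-depths $2n/\ell$. The original model $E$ is related to $E'$ by $x=u^2x'+r$ with $u\in L^\times$. The key step is to show $v_L(u)=1$, that is, $v_K(u)=1/2$.

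To get $v_L(u)=1$, I will compare discriminants. We have $v_K(\Delta_{\min}(E))=n+6$ for type $I_n^*$ when $p\neq 2$, which is standard from Tate's algorithm or from $\Delta_{E}=(\pi u)^6\Delta_{E_0}$. Hence $v_L(\Delta_{\min}(E))=2n+12$, while $v_L(\Delta')=2n$ for the $I_{2n}$ model $E'$. The relation $\Delta_{\min}(E)=u^{12}\Delta'$ then gives $12v_L(u)=12$.

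Lemma~\ref{lem:transformation} then gives $v_L(x_i-x_j)=2+v_L(x_i'-x_j')$. Dividing by $e_{L/K}=2$ yields $d_{\mathcal{X}_\ell}=1$ and relative depths $\tfrac12\cdot 2n/\ell=n/\ell$, as claimed. One normalisation issue needs care. The paper normalises depths by $\frac1e v_M$ with $M=K(E[\ell])$, which amounts to using $v_K$ extended to $\overline K$. So I only need consistency: valuations computed over $L$ and divided by $e_{L/K}$ equal this extended $v_K$.

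I expect the main obstacle to be the justification that $E$ is genuinely a ramified quadratic twist of a multiplicative curve, so that $e_{L/K}=2$ with $L$ as small as claimed. The same step must give the exact discriminant valuation $n+6$. I would first try the twist argument directly. Write $E_0$ in Tate form over $K$, possibly after an unramified extension, which changes no valuations. Then $E:\ y^2=x^3+\pi u a_2x^2+(\pi u)^2a_4x+(\pi u)^3a_6$ is a model with $\Delta=(\pi u)^6\Delta_0$. I would check that this model is minimal, since $v(c_4)=2$ and a valuation-$12$ drop is impossible when $v_K(\Delta)=n+6<12+n$. The inequality $c_4$-valuation $<4$ suffices here because $p\ne2,3$ is not needed once $v(c_4)=2$ (if $p=3$, I would verify minimality via Tate's algorithm instead). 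Over $L$, the substitution $x=\pi u\,x'$ recovers $E_0$, which gives $v_L(u)=1$ immediately.
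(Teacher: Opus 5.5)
Your proposal is correct and takes essentially the same route as the paper. Both pass to the ramified quadratic extension $L$ where $E$ becomes type $I_{2n}$, compare $v_L(\Delta_{\min})=2n+12$ with $v_L(\Delta')=2n$ to get $v_L(u)=1$, apply Lemma~\ref{lem:transformation}, and divide by $e_{L/K}=2$. Your explicit twist model is a useful addition that independently justifies $v_K(\Delta_{\min})=n+6$ and $v_L(u)=1$; note that the minimality check $v(c_4)=2<4$ already works for $p=3$, so you need no separate appeal to Tate's algorithm there.
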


    \begin{proof}

      By the theory of Tate curves, an elliptic curve with reduction type $I_n^*$ is a quadratic twist of a curve with multiplicative reduction. It acquires split multiplicative reduction over a ramified quadratic extension $L/K$ (so $e=2$), and over $L$ its reduction type becomes $I_{2n}$. Let $E'$ be the minimal Weierstrass model over $L$ possessing this semistable reduction of type $I_{2n}$. The models $E$ and $E'$ are related over $L$ by an admissible change of variables, meaning their discriminants satisfy $\Delta_{min} = u^{12}\Delta'$ for some $u \in L^\times$. This means that $v_L(\Delta') = 2n$. Further, for $p \neq 2$, Tate's algorithm gives $v_K(\Delta_{\min}) = n + 6$, so, we obtain
      $$12v_L(u) = 2(n + 6) - 2n = 12 \implies v_L(u) = 1.$$
      Applying this to Lemma \ref{lem:transformation}, we find that absolute depths of the cluster picture of $E$ over $L$ shift from the absolute depths of the cluster picture of $E'$, by $v_L(u)$. Normalising back to $K$ and dividing by $e=2$, the lemma follows.
    \end{proof}

    \begin{rem}
      If $E$ has additive multiplicative reduction at $p = 2$, Tate's algorithm does not give a formula for the minimal discriminant, complicating the calculation of $v_L(u)$. A different approach is provided in Section \ref{subsec:pot_mult}.
    \end{rem}

    \section{Classification by Reduction Type}\label{sec:classification}

    The purpose of this section is to prove Theorem \ref{thm:kodara_intro}. We shall use the results from Section \ref{sec:absolute_depths} to classify Kodaira types from  $\ell$-torsion cluster pictures.

    \subsection{Potentially Good reduction}\label{subsec:pot_good}
    From Theorem \ref{thm:potential_red}, if the cluster picture consists of a single cluster, then $E$ has potentially good reduction. Let $d$ be its absolute depth, computed from an arbitrary Weierstrass model. By Lemma \ref{lem:potentially_good} and Lemma \ref{lem:transformation},
    $$d = \frac{v_K(\Delta_{\min})}{6} + 2k \quad \text{for some } k \in \mathbb{Z},$$
    which implies the congruence $v_K(\Delta_{\min}) \equiv 6d \pmod{12}$

\begin{enumerate}[label=(\roman*)]
\item \textit{Tame case ($p \ge 5$):} Tate's algorithm gives a one-to-one correspondence between $v_K(\Delta_{\min})$ and the Kodaira type. In particular,
$v_K(\Delta_{\min}) \in \{0,2,3,4,6,8,9,10\},$ so
the computed value of $6d \pmod{12}$ uniquely determines $v_K(\Delta_{\min})$.

\item \textit{Wild case ($p=2,3$):} Recall that, at $p=2$, we assume the curve does not have potentially good reduction of type $I_n^*$. For $p=2,3$, the correspondence between Kodaira types and $v_K(\Delta_{\min})$ is no longer injective, and so we use the conductor exponent $v_K(N)$. If $v_K(N)=0$, the curve has good reduction. Otherwise, by Ogg's formula, $v_K(\Delta_{\min}) - v_K(N) = m - 1$ (where $m$ is the number of irreducible components in the special fibre) is a constant, as is summarised below:

\begin{center}
  Table \ref{sec:classification}.1
  \vspace{0.1in}

  \renewcommand{\arraystretch}{1.2}
\begin{tabular}{|c|c|c|c|c|c|c|c|c|c|}
  
  \hline
  Kodaira type  & $I_n$  & $II$ & $III$ & $IV$ & $I_0^*$ & $I_n^*$ & $IV^*$ & $III^*$ & $II^*$ \\
  \hline
  $v_K(\Delta_{\min})-v_K(N)$ & $n-1$ & $0$ & $1$ & $2$ & $4$ & $n+4$ & $6$ & $7$ & $8$ \\
  \hline
\end{tabular}

\end{center}
\vspace{0.3cm}
from which, it follows that $0 \le v_K(\Delta_{\min}) - v_K(N) \le 8$. Combining this with $6d = v_K(\Delta_{\min}) + 12k$ gives
$$
v_K(N) \le 6d - 12k \le v_K(N) + 8.
$$
Since this interval has length $8 < 12$, there is a unique $k \in \Z$ satisfying the inequality. Adjusting by this $k$ recovers $v_K(\Delta_{\min})$, and the Kodaira type then follows from $v_K(\Delta_{\min}) - v_K(N)$.
\end{enumerate}

\subsection{Distinguishing multiplicative reduction and additive multiplicative reduction}\label{subsec:differentiating}
By Theorem \ref{thm:potential_red}, if the $\ell$-torsion cluster picture is multiplicative, the curve possesses either multiplicative reduction (Type $I_n$) or additive multiplicative reduction (Type $I_n^*$). Because the shape of the cluster picture of both reduction types is identical, we outline a way to distinguish them based on residue characteristic.
    
    \begin{enumerate}[label=(\alph*)]
        \item \textit{Tame case ($p \neq 2$):} The parity of the absolute depth of the outermost cluster $d_{\mathcal{X}_{\ell}}$ distinguishes multiplicative from additive multiplicative reduction.
        
        \begin{lem} Let $p \neq 2$.  If $d_{\mathcal{X}_\ell} \equiv 0 \pmod 2$, the curve has multiplicative reduction (Type $I_n$).  If $d_{\mathcal{X}_\ell} \equiv 1 \pmod 2$, the curve has additive multiplicative reduction (Type $I_n^*$).
          \end{lem}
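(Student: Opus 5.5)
The plan is to work first on a minimal Weierstrass model, where the earlier results give the absolute depth directly, and then transfer to an arbitrary model using Lemma \ref{lem:transformation}.

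\textbf{Minimal models.} By Theorem \ref{thm:potential_red}, a multiplicative cluster picture means $E$ has potentially multiplicative reduction, so the special fibre is of type $I_n$ or $I_n^*$.

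If the type is $I_n$, then Proposition \ref{prop:base_cases}(ii) applies directly to the minimal model and gives $d_{\mathcal{X}_\ell}=0$. For non-split reduction, we first pass to an unramified quadratic extension, which does not change valuations.

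If the type is $I_n^*$ and $p\neq 2$, then Lemma \ref{lem:potentially_multiplicative} gives $d_{\mathcal{X}_\ell}=1$ on the minimal model. Since $p\ne 2$ in both cases, the minimal model over $K$ has $0$ and $1$ as its two possible values for the outermost depth.

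\textbf{Arbitrary models.} Any Weierstrass model over $K$ is related to a minimal one by an admissible change of variables $x=u^2x'+r$ with $u\in K^\times$. By Lemma \ref{lem:transformation}, every absolute depth, and in particular $d_{\mathcal{X}_\ell}$, shifts by $2v_K(u)$. Because $u\in K^\times$, this shift is an even integer. Hence the parity of $d_{\mathcal{X}_\ell}$ does not depend on the model, and in particular $d_{\mathcal{X}_\ell}$ is always an integer. Combining this with the minimal case yields the dichotomy: $d_{\mathcal{X}_\ell}\equiv 0 \pmod 2$ forces type $I_n$, and $d_{\mathcal{X}_\ell}\equiv 1\pmod 2$ forces type $I_n^*$. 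The two conditions are exhaustive and mutually exclusive, so each implication is also an equivalence.

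\textbf{Main obstacle.} The delicate point is the $I_n^*$ input, namely Lemma \ref{lem:potentially_multiplicative}, and I will take it as given. I would still double-check two things in it. The first is that the twist becomes split over a ramified quadratic $L$ with type $I_{2n}$, so that $v_L(\Delta')=2n$. The second is that Tate's algorithm for $p\ne2$ gives $v_K(\Delta_{\min})=n+6$. Together these give $v_L(u)=1$, and hence a depth shift of $2v_L(u)/e=1$ over $K$. I also need the cluster depth of $E'$ to be $0$, which holds by Proposition \ref{prop:base_cases}(ii) applied over $L$.
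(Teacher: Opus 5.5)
Your proposal is correct and follows the paper's proof. On a minimal model, $d_{\mathcal{X}_\ell}$ is $0$ for type $I_n$ (Proposition \ref{prop:base_cases}) and $1$ for type $I_n^*$ when $p\neq 2$ (Lemma \ref{lem:potentially_multiplicative}). By Lemma \ref{lem:transformation} with $u\in K^\times$, any change of model shifts $d_{\mathcal{X}_\ell}$ by the even integer $2v_K(u)$, so its parity is invariant and distinguishes the two types.
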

          
          \begin{proof}
            If $E$ is given by a minimal Weierstrass equation, then the outermost depth $d_{\mathcal{X}_{\ell}}$ is $0$ in the multiplicative case by Proposition \ref{prop:base_cases}, and $1$ in the additive case by Lemma \ref{lem:potentially_multiplicative}. By Lemma \ref{lem:transformation}, an arbitrary change of coordinates to a non-minimal model shifts all the absolute depths by an even integer $2k$, leaving their parity invariant.
          \end{proof}

      \item \textit{Wild case ($p=2$):} We distinguish multiplicative from additive multiplicative reduction using the action of inertia on the $\ell$-torsion.
      
      Let $\bar{\rho}: \text{Gal}(\overline{K}/K) \to \text{GL}_2(\mathbb{F}_\ell)$ be the mod-$\ell$ Galois representation attached to $E[\ell]$.

      \begin{enumerate}[label=(\roman*)]
        \item \textit{Multiplicative Reduction (Type $I_n$):} $E$ has multiplicative reduction over $K$ if and only if $\bar{\rho}(I_K)$  consists of unipotent matrices; equivalently, every $\sigma \in I_K$ has characteristic polynomial $(x-1)^2 \pmod \ell$.
        
        \item \textit{Additive Multiplicative Reduction (Type $I_n^*$):} $E$ has potentially multiplicative reduction over $K$ if and only if $\bar{\rho}(I_K)$ contains elements acting with the characteristic polynomial $(x+1)^2 \pmod \ell$. 
    \end{enumerate}
    
    This distinction arises from the theory of Tate curves. A curve of type $I_n^*$ is a quadratic twist of a split multiplicative curve by a ramified quadratic character, say $\psi$. Since the extension is ramified, there exists $\sigma \in I_K$ with  $\psi(\sigma)=-1$, so the corresponding inertia action has eigenvalues $-1$. In particular, $\bar{\rho}(\sigma)$ is not completely unipotent, which distinguishes additive multiplicative reduction from multiplicative reduction.

    \end{enumerate}

    \subsection{Multiplicative reduction}\label{subsec:mult}
    The relative depths of the nested subclusters are $n/\ell$. Because the relative depths $\delta_{\mathfrak{s}}$ are invariant under model changes, scaling $\delta_{\mathfrak{s}}$ by $\ell$ yields the integer $n$.

    \subsection{Additive multiplicative reduction}\label{subsec:pot_mult}
     Recovering $n$ depends on the ramification of the quadratic extension $L/K$ over which $E$ acquires split multiplicative reduction.

    \begin{enumerate}[label=(\roman*)]
      \item \textit{Tame Ramification ($p \ne 2$):} Just as in the multiplicative case, the relative depths of the inner clusters are $n/\ell$, so scaling by $\ell$ recovers $n$.
      
      \item \textit{Wild Ramification ($p = 2$):}  A curve of type $I_n^*$ is a quadratic twist of a multiplicative curve, acquiring split multiplicative reduction over a ramified quadratic extension $L/K$ \cite[Lemma 5.2, V]{SilII}. Lorenzini \cite[Theorem 2.8]{Dinomult} shows that over $L$, the curve attains reduction type $I_{2\nu}$ (where $\nu = -v_K(j(E))$), and proves that $n = \nu + 4(s_{L/K}-1)$. Here, $s_{L/K}$ is the valuation of the different of the extension $\mathcal{O}_L/\mathcal{O}_K$, defined via the higher ramification groups of $\text{Gal}(L/K)$ by $s_{L/K} = \sum_{i=0}^{\infty} (|H_i| - 1)$ \cite[IV.2, Proposition 4]{LocalFields}. Therefore, to determine $n$, we pass to the semistable extension $L$, where the relative depths of the cluster picture over $L$ yields $\nu$, which, combined with $s_{L/K}$, recovers the integer $n$ for the original curve over $K$.
    \end{enumerate}

    \begin{proof}[Proof of Theorem \ref{thm:kodara_intro}]
      The classification of potentially good reduction follows from the results in Section \ref{subsec:pot_good}. For curves whose $\ell$-torsion cluster picture is not good, the curve is potentially multiplicative from Theorem \ref{thm:potential_red}. The multiplicative and additive multiplicative cases are distinguished by the parity of $d_{\mathcal{X}_{\ell}}$ if the residue characteristic is $p \ne 2$, or by the action of inertia if $p=2$, as shown in Section \ref{subsec:differentiating}. Finally, the Kodaira types $I_n$ and $I_n^*$ are recovered by scaling the relative depths as detailed in Sections \ref{subsec:mult} and \ref{subsec:pot_mult}. This exhausts all cases, completing the proof.
    \end{proof}

    \printbibliography

    \end{document}